\journal{arXiv}
\newtheorem{Definition}{Definition}
\newtheorem{Theorem}{Theorem}
\newtheorem{Lemma}[Theorem]{Lemma}
\newtheorem{Corollary}[Theorem]{Corollary}
\begin{document}
\begin{frontmatter}
\title{Investigation of generalised Fredholm equations and their direct solution subject to constraints}
\author[newcastle]{P.G. Clark\corref{cor1}}
\ead{peter.clark@newcastle.ac.uk}
\author[brad]{A.S. Wood}
\ead{a.s.wood@bradford.ac.uk}
\author[brad]{P. Olley}
\ead{p.olley@bradford.ac.uk}
\address[newcastle]{Newcastle University, Biogerontology Building, Newcastle upon Tyne, NE4 5PL, UK}
\address[brad]{University of Bradford, Richmond Road, Bradford, BD7 1DP, UK}
\cortext[cor1]{Corresponding author}
\begin{abstract}
This paper explores the solution of Fredholm-like equations with infinite dimensional solution spaces. We set out to find a method for determining a particular solution to a Fredholm-like equation subject to a given constraint. The relevance and application comes via a connection to certain dynamics of gas-like systems.
\end{abstract}
\begin{keyword}
Fredholm equation \sep second kind


\end{keyword}
\end{frontmatter}

\section{Fredholm Equations}
Certain generalisations of the Bolzmann equation have been developed to study the mesoscopic dynamics of systems that are much more complex than simple gases \cite{Bellouquid:2006rr, Grmela:1983xy, Grmela:1983jk}. In a recent study of one such system \cite{Clark:2012fk} linked to polymer dynamics in micro-moulding, an application of the Chapman-Enskog expansion was implemented. This method produces a linear integral equation, itself a generalisation of the Fredholm equation of the second kind, that is usually solved by \textit{ad hoc} methods, in part by appealing to constraints that appear to restrict the equation in a countably infinite number of degrees of freedom. A conclusion that may be reached is that the unconstrained Fredholm-like equation has an infinite dimensional solution space. Whether or not this is the case, no general solution method for Fredholm-like equations with infinite dimensional solution spaces was available and the \textit{ad hoc} method usually used was not applicable in our case. Here we develop a method for determining a solution to a Fredholm-like equation subject to a given constraint with out considering the determination of the complete solution space.

Fredholm studied integral equations with finite and constant limits of integration  in one free and one integrated variable \cite{Fredholm:1903yq}, essentially the one dimensional case:
\[\varphi(x)+\int_0^1f(x,y)\varphi(y)dy=\phi(x)\]
However, modern Fredholm theory is now couched and understood in terms of the study of Fredholm operators on Hilbert and Banach spaces \cite{Ruston:1986fk}, which cover a wider range of equations. It is normally studied with emphasis placed upon the spectra of the equation or operator. Equations that are not Fredholm are often referred to as singular, even if they have a Fredholm-like form. For example, the equation
\[\phi(x)-\sqrt{\frac{2}{\pi}}\int_0^{\infty}\sin(xs)\phi(s)ds=0\]
is known to have infinitely many linearly independent solutions, which is a situation that cannot occur in Fredholm equations.

Boundary value problems for differential equations may be reduced to the problem of solving a Fredholm equation \cite[p.62]{Jerri:1985fk}. Further, the mixed boundary value problem may gives rise to a system of singular Fredholm-like equations \cite[p.68]{Jerri:1985fk}. Consequently, both Fredholm and singular Fredholm-like equations occur commonly in mathematical physics, including the study of electromagnetic fields. Additionally, singular integral equations arise in Hilbert's problem \cite[p.298]{Mikhlin:1964uq}, in problems involving elastic membranes in contact \cite[p.310]{Mikhlin:1964uq}, and in certain problems of fluid dynamics \cite[p.325]{Mikhlin:1964uq}.

\section{The Bott-Duffin Pseudo-inverse Applied to Operator Equations}
A common approach to solving integral equations involves expressing the equations as operator equations on a Hilbert space, and a quite common general form for such equations is
\begin{equation}
\label{Fredholm Operator 1}
\mathcal{A}\vec{x}+\vec{\varphi}=\vec{x}
\end{equation}
For certain conditions on the operator \(\mathcal{A}\), equation \eqref{Fredholm Operator 1} has a unique solution that may be obtained by a Neumann series
\begin{equation}
\label{Neumann series 1}
\sum_{i=0}^{\infty}\mathcal{A}^i\vec{\varphi}=\vec{x}
\end{equation}
However, sometimes it will occur that, despite there being no unique solution, we would like to seek a particular solution subject to a (certain) set of constraints
\begin{equation}
\label{Constraints 1}
\left<\vec{x},\vec{y}_i\right>=0, \: i=1,\cdots ,n
\end{equation}
We shall assume that \(\left<\vec{y}_i,\vec{y}_j\right>=\delta_{ij}\) noting that if this were not the case it is easy to construct an equivalent set of \(\vec{y}_i\) with that property using the Gram-Schmidt process. These constraints define a subspace of the Hilbert space, itself a Hilbert space.
\newline
\begin{Definition} \label{proj def 1}
We define a projection-like operator into this space given by
\begin{equation*}
\mathcal{P}_k\vec{x}=\vec{x}-\sum_{i=1}^n\vec{k}_i\left<\vec{x},\vec{y}_i\right>, \: \left<\vec{k}_i,\vec{y}_j\right>=\delta_{i,j}
\end{equation*}
\end{Definition}
We call this a projection-like operator because it has two key properties of a projection. Firstly, it acts as an identity on the subspace (it can be clearly seen that equation \eqref{Constraints 1} implies \(\mathcal{P}_k\vec{x}=\vec{x}\)). Secondly, it projects into the subspace, which is easily proven by applying equation \eqref{Constraints 1} to definition \ref{proj def 1} giving
\[\left<\mathcal{P}_k\vec{x},\vec{y}_j\right>=\left<\vec{x},\vec{y}_j\right>-\sum_{i=1}^n\left<\vec{k}_i,\vec{y}_j\right>\left<\vec{x},\vec{y}_i\right>=\left<\vec{x},\vec{y}_j\right>-\sum_{i=1}^n\delta_{i,j}\left<\vec{x},\vec{y}_i\right>=0\]
\begin{Definition} \label{comp proj def 1}
We define the complimentary projection \(\tilde{\mathcal{P}}_k\) which has the two equivalent definitions
\begin{equation*}
\tilde{\mathcal{P}}_k+\mathcal{P}_k=\mathcal{I} \Leftrightarrow \tilde{\mathcal{P}}_k\vec{x}=\sum_{i=1}^n\vec{k}_i\left<\vec{x},\vec{y}_i\right>
\end{equation*}
\(\mathcal{I}\) is the identity operator.
\end{Definition}
\quad
\begin{Theorem} \label{theorem of solution}
If \(\tilde{\mathcal{P}}_k\sum_{i=0}^{\infty}(\mathcal{A}\mathcal{P}_k)^i\vec{\varphi}=\vec{0}\) then \(\sum_{i=0}^{\infty}(\mathcal{A}\mathcal{P}_k)^i\vec{\varphi}\) is a solution to equation \eqref{Fredholm Operator 1} subject to constraints \eqref{Constraints 1}, provided that the vectors \(\vec{k}_i\) are linearly independent and \(\left<\vec{k}_i,\vec{y}_j\right>=\delta_{i,j}\).
\end{Theorem}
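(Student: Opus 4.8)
The plan is to take $\vec{x}:=\sum_{i=0}^{\infty}(\mathcal{A}\mathcal{P}_k)^i\vec{\varphi}$ as the candidate and to verify two things separately: that it satisfies the constraints \eqref{Constraints 1}, and that it satisfies the operator equation \eqref{Fredholm Operator 1}. Throughout I would assume, as is already implicit in the statement (and parallels the ``certain conditions on $\mathcal{A}$'' invoked for the Neumann series \eqref{Neumann series 1}), that $\mathcal{A}$ is bounded and that the series converges in norm; note that $\mathcal{P}_k$ is then automatically bounded, since $\|\mathcal{P}_k\vec{x}\|\le\bigl(1+\sum_{i=1}^{n}\|\vec{k}_i\|\bigr)\|\vec{x}\|$ by Cauchy--Schwarz together with $\|\vec{y}_i\|=1$, so $\mathcal{A}\mathcal{P}_k$ is bounded as well.

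For the constraints: the hypothesis $\tilde{\mathcal{P}}_k\vec{x}=\vec{0}$ reads, by Definition \ref{comp proj def 1}, $\sum_{i=1}^{n}\vec{k}_i\langle\vec{x},\vec{y}_i\rangle=\vec{0}$. Because the $\vec{k}_i$ are linearly independent, every coefficient must vanish, i.e. $\langle\vec{x},\vec{y}_i\rangle=0$ for $i=1,\dots,n$, which is precisely \eqref{Constraints 1}. A by-product I will need for the second half is $\mathcal{P}_k\vec{x}=\vec{x}$; this follows at once from $\mathcal{P}_k=\mathcal{I}-\tilde{\mathcal{P}}_k$ (Definition \ref{comp proj def 1}) together with $\tilde{\mathcal{P}}_k\vec{x}=\vec{0}$, or equivalently from the identity-on-the-subspace property of $\mathcal{P}_k$ noted after Definition \ref{proj def 1}.

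For the operator equation: I would isolate the $i=0$ term and re-index, using boundedness of $\mathcal{A}\mathcal{P}_k$ to pull it through the convergent sum,
\begin{equation*}
\vec{x}=\vec{\varphi}+\sum_{i=1}^{\infty}(\mathcal{A}\mathcal{P}_k)^i\vec{\varphi}=\vec{\varphi}+(\mathcal{A}\mathcal{P}_k)\sum_{i=0}^{\infty}(\mathcal{A}\mathcal{P}_k)^i\vec{\varphi}=\vec{\varphi}+\mathcal{A}\mathcal{P}_k\vec{x},
\end{equation*}
and then substitute $\mathcal{P}_k\vec{x}=\vec{x}$ from the previous paragraph to obtain $\vec{x}=\vec{\varphi}+\mathcal{A}\vec{x}$, which is exactly \eqref{Fredholm Operator 1}. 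Combining the two paragraphs proves the theorem.

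The algebra above is routine; what actually deserves care is the set of analytic hypotheses that make it legitimate. One needs the series $\sum_{i=0}^{\infty}(\mathcal{A}\mathcal{P}_k)^i\vec{\varphi}$ to converge — otherwise the very quantity $\tilde{\mathcal{P}}_k\sum_{i=0}^{\infty}(\mathcal{A}\mathcal{P}_k)^i\vec{\varphi}$ appearing in the hypothesis is undefined — and one needs $\mathcal{A}$, and hence $\mathcal{A}\mathcal{P}_k$, bounded so that the re-indexing step which factors $\mathcal{A}\mathcal{P}_k$ out of the infinite sum is valid. I would therefore make these explicit standing assumptions alongside the theorem; with them in force, the reduction above is the entire proof, and no determination of the complete solution space is required, which is the aim stated in the introduction.
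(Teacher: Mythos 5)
Your proof is correct and takes essentially the same route as the paper: both arguments verify that the Neumann series for $\mathcal{A}\mathcal{P}_k$ satisfies the modified fixed-point equation $\vec{x}=\vec{\varphi}+\mathcal{A}\mathcal{P}_k\vec{x}$, then use $\tilde{\mathcal{P}}_k\vec{x}=\vec{0}$ together with the linear independence of the $\vec{k}_i$ to deduce the constraints \eqref{Constraints 1} and the identity $\mathcal{P}_k\vec{x}=\vec{x}$, hence equation \eqref{Fredholm Operator 1}. Your write-up is in fact more explicit than the paper's about the re-indexing step and the convergence/boundedness hypotheses, which the paper only gestures at as ``specified necessary conditions.''
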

\begin{proof}
Given the constructions in definitions \ref{proj def 1} and \ref{comp proj def 1}, and that equation \eqref{Fredholm Operator 1} can be reformulated as \(\vec{\varphi}=(\mathcal{I}-\mathcal{A})\vec{x}\), we may consider the two forms of the related operator equation
\begin{equation}
\label{Fredholm Operator 2}
\vec{\varphi}=(\mathcal{I}-\mathcal{A})\mathcal{P}_k\vec{x}+\tilde{\mathcal{P}}_k\vec{x}\Leftrightarrow \mathcal{A}\mathcal{P}_k\vec{x}+\vec{\varphi}=\mathcal{P}_k\vec{x}+\tilde{\mathcal{P}}_k\vec{x}=\mathcal{I}\vec{x}=\vec{x}
\end{equation}
The validity of the relation is due to the first definition of \(\tilde{\mathcal{P}}_k\) in definition \ref{comp proj def 1}. It is trivially proved that equation \eqref{Fredholm Operator 1} and equation \eqref{Fredholm Operator 2} have identical solutions when \(\tilde{\mathcal{P}}_k\vec{x}=\vec{0}\), and provided that the linear independence of all \(\vec{k}_i\) is assured then this condition is equivalent to equation \eqref{Constraints 1}. Consequently, if the operator \(\mathcal{A}\mathcal{P}_k\) meets specified necessary conditions, then equation \eqref{Fredholm Operator 2} may be solved by the Neumann series
\begin{equation*}
\sum_{i=0}^{\infty}(\mathcal{A}\mathcal{P}_k)^i\vec{\varphi}=\vec{x}
\end{equation*}
If \(\tilde{\mathcal{P}}_k\sum_{i=0}^{\infty}(\mathcal{A}\mathcal{P}_k)^i\vec{\varphi}=\vec{0}\), then this is also a solution to equation \eqref{Fredholm Operator 1} subject to constraints \eqref{Constraints 1}.
\end{proof}
This method is comparable to the Bott-Duffin method for solving certain matrix equations except that the `projection' operator proposed here is more general. This allows us to vary the \(\vec{k}_i\), subject to conditions of linear independence and \(\left<\vec{k}_i,\vec{y}_j\right>=\delta_{i,j}\), seeking values for which \(\tilde{\mathcal{P}}_k\sum_{i=0}^{\infty}(\mathcal{A}\mathcal{P}_k)^i\vec{\varphi}\) is well defined and is equal to \(\vec{0}\) in the pursuit of solving equation \eqref{Fredholm Operator 1} subject to constraints \eqref{Constraints 1}.

\section{Lemmas Relating to Conditions on Products of Hilbert Spaces}
It is useful in the application of the previous results to define some new notation and derive some lemmas.
\newline
\begin{Definition}
Given an innerproduct \(\left<,\right>:\mathbf{H}_1\otimes \mathbf{H}_2\times\mathbf{H}_1\otimes \mathbf{H}_2\rightarrow \mathbb{C}\) of a Hilbert space given by the tensor product of Hilbert spaces \(\mathbf{H}_1\) and \(\mathbf{H}_2\), and given that \(\left<,\right>_i:\mathbf{H}_i\times\mathbf{H}_i\rightarrow \mathbb{C}\) is the innerproduct of \(\mathbf{H}_i\) (\(i=1,2\)), we use \(\left<,\right>_{2^{\prime}}:\mathbf{H}_1\otimes \mathbf{H}_2\times\mathbf{H}_2\rightarrow \mathbf{H}_1\) to represent a bi-linear operator such that 
\[\left<\left<\vec{x},\vec{y}\,\right>_{2^{\prime}},\vec{z}\,\right>_1=\left<\vec{x},\vec{z}\otimes\vec{y}\,\right>\]
\end{Definition}
assuming that such an operator exists. 
\newline
\begin{Lemma}
If \(\mathbf{H}_1\) is a separable Hilbert space then it follows that \(\vec{0}=\left<\vec{x},\vec{y}_i\right>_{2^{\prime}}\) provides a countably infinite set of constraints like those in equation \eqref{Constraints 1} with the solution space being orthogonal to the vectors \(\vec{\psi}_j\otimes\vec{y}_i\) where \(\vec{\psi}_i\) is an orthonormal basis of \(\mathbf{H}_1\).
\end{Lemma}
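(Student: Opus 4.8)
The plan is to convert the single $\mathbf{H}_1$-valued equation $\vec{0}=\langle\vec{x},\vec{y}_i\rangle_{2^{\prime}}$ into a family of scalar constraints of the type appearing in equation \eqref{Constraints 1} by testing it against an orthonormal basis of $\mathbf{H}_1$, and to use separability of $\mathbf{H}_1$ to ensure that family is countable. The starting observation is the standard fact that a vector $\vec{v}$ in a Hilbert space is $\vec{0}$ if and only if $\langle\vec{v},\vec{\psi}_j\rangle_1=0$ for every element $\vec{\psi}_j$ of an orthonormal basis $\{\vec{\psi}_j\}$; because $\mathbf{H}_1$ is separable we may take this basis to be countable, indexed by $j\in\mathbb{N}$.

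First I would apply this to $\vec{v}=\langle\vec{x},\vec{y}_i\rangle_{2^{\prime}}$, which lies in $\mathbf{H}_1$ by the definition of $\langle,\rangle_{2^{\prime}}$, obtaining that $\langle\vec{x},\vec{y}_i\rangle_{2^{\prime}}=\vec{0}$ is equivalent to $\langle\langle\vec{x},\vec{y}_i\rangle_{2^{\prime}},\vec{\psi}_j\rangle_1=0$ for all $j\in\mathbb{N}$. Next I would invoke the defining relation $\langle\langle\vec{x},\vec{y}\rangle_{2^{\prime}},\vec{z}\rangle_1=\langle\vec{x},\vec{z}\otimes\vec{y}\rangle$ with $\vec{z}=\vec{\psi}_j$ and $\vec{y}=\vec{y}_i$, rewriting each of these as $\langle\vec{x},\vec{\psi}_j\otimes\vec{y}_i\rangle=0$. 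Letting $(i,j)$ run over the product of the (finite or countable) index set of the $\vec{y}_i$ with $\mathbb{N}$ --- a countable union of countable sets, hence countable --- this is precisely a countably infinite family of constraints of the form $\langle\vec{x},\vec{z}\rangle=0$ as in equation \eqref{Constraints 1}, with constraint vectors $\vec{z}=\vec{\psi}_j\otimes\vec{y}_i$; the solution set is their common kernel, i.e. the orthogonal complement of the closed span of the vectors $\vec{\psi}_j\otimes\vec{y}_i$, which is the assertion of the lemma. I would add the remark that $\langle\vec{\psi}_j\otimes\vec{y}_i,\vec{\psi}_{j'}\otimes\vec{y}_{i'}\rangle=\langle\vec{\psi}_j,\vec{\psi}_{j'}\rangle_1\langle\vec{y}_i,\vec{y}_{i'}\rangle_2=\delta_{jj'}\langle\vec{y}_i,\vec{y}_{i'}\rangle_2$, so that if the $\vec{y}_i$ are orthonormal in $\mathbf{H}_2$ the constraint vectors are automatically orthonormal as required in equation \eqref{Constraints 1}, and otherwise the Gram--Schmidt remark following that equation applies without change.

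There is no serious obstacle: the argument is a direct application of the basis characterisation of the zero vector combined with the definition of $\langle,\rangle_{2^{\prime}}$. The only genuine points of care are that separability is exactly the hypothesis needed to keep the index set countable (an inseparable $\mathbf{H}_1$ would force an uncountable family of constraints), and that the operator $\langle,\rangle_{2^{\prime}}$ must actually exist and map into $\mathbf{H}_1$ for the pairing $\langle\langle\vec{x},\vec{y}_i\rangle_{2^{\prime}},\vec{\psi}_j\rangle_1$ to make sense --- but existence is assumed in the definition of $\langle,\rangle_{2^{\prime}}$. One might also remark, for consistency with the framework around equation \eqref{Constraints 1}, that the resulting solution space is an intersection of kernels of bounded linear functionals, hence a closed subspace and so itself a Hilbert space.
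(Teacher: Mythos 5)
Your argument is exactly the paper's: test \(\left<\vec{x},\vec{y}_i\right>_{2^{\prime}}=\vec{0}\) against the countable orthonormal basis \(\vec{\psi}_j\) of \(\mathbf{H}_1\) and use the defining relation of \(\left<,\right>_{2^{\prime}}\) to rewrite each resulting scalar equation as \(\left<\vec{x},\vec{\psi}_j\otimes\vec{y}_i\right>=0\). The additional remarks on countability, orthonormality of the constraint vectors, and closedness of the solution space are correct but not part of the paper's (very terse) proof.
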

\begin{proof}
\[\vec{0}=\left<\vec{x},\vec{y}_i\right>_{2^{\prime}}\Leftrightarrow 0=\left<\vec{0},\vec{\psi}_j\right>_1=\left<\left<\vec{x},\vec{y}_i\right>_{2^{\prime}},\vec{\psi}_j\right>_1=\left<\vec{x},\vec{\psi}_j\otimes\vec{y}_i\right> \: \forall \: j\in \mathbb{N}\]
\end{proof}
\begin{Lemma}
The orthonormality of these constraints is guaranteed by the condition \(\left<\vec{y}_i,\vec{y}_j\right>_2=\delta_{ij}\)
\end{Lemma}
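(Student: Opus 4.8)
By the preceding lemma the single constraint $\vec{0}=\left<\vec{x},\vec{y}_i\right>_{2^{\prime}}$ unpacks into the countable family of scalar constraints $\left<\vec{x},\vec{\psi}_j\otimes\vec{y}_i\right>=0$, so ``orthonormality of these constraints'' is to be read as the statement that the constraint vectors $\vec{\psi}_j\otimes\vec{y}_i$, indexed by the pair $(j,i)$ with $j$ ranging over $\mathbb{N}$ and $i$ over the index set of the $\vec{y}_i$, satisfy $\left<\vec{\psi}_j\otimes\vec{y}_i,\vec{\psi}_l\otimes\vec{y}_m\right>=\delta_{jl}\,\delta_{im}$. This is precisely the normalisation $\left<\vec{y}_i,\vec{y}_j\right>=\delta_{ij}$ that was assumed for the constraints \eqref{Constraints 1} in order to run Definition~\ref{proj def 1}, now interpreted inside $\mathbf{H}_1\otimes\mathbf{H}_2$. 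The plan is simply to evaluate this inner product and factor it through the two factor inner products.

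The one structural input is the factorisation of the inner product on the Hilbert-space tensor product on elementary tensors, $\left<\vec{a}\otimes\vec{b},\vec{c}\otimes\vec{d}\right>=\left<\vec{a},\vec{c}\right>_1\left<\vec{b},\vec{d}\right>_2$; equivalently, staying with the paper's operator $\left<,\right>_{2^{\prime}}$, one tests the defining identity $\left<\left<\vec{a}\otimes\vec{b},\vec{y}_i\right>_{2^{\prime}},\vec{z}\right>_1=\left<\vec{a}\otimes\vec{b},\vec{z}\otimes\vec{y}_i\right>$ against every $\vec{z}\in\mathbf{H}_1$ and uses non-degeneracy of $\left<,\right>_1$ to conclude that $\left<\vec{a}\otimes\vec{b},\vec{y}_i\right>_{2^{\prime}}$ is the scalar $\left<\vec{b},\vec{y}_i\right>_2$ (or its conjugate, which is immaterial below since these scalars will be real) times $\vec{a}$. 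Either way, taking $\vec{a}\otimes\vec{b}=\vec{\psi}_j\otimes\vec{y}_i$ and then pairing with $\vec{\psi}_l$ in $\mathbf{H}_1$ gives $\left<\vec{\psi}_j\otimes\vec{y}_i,\vec{\psi}_l\otimes\vec{y}_m\right>=\left<\vec{\psi}_j,\vec{\psi}_l\right>_1\left<\vec{y}_i,\vec{y}_m\right>_2$. The orthonormality of the basis $\{\vec{\psi}_j\}$ of $\mathbf{H}_1$ then supplies the factor $\delta_{jl}$, and the hypothesis $\left<\vec{y}_i,\vec{y}_m\right>_2=\delta_{im}$ supplies the factor $\delta_{im}$, which is the claimed orthonormality.

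There is no deep obstacle: the content is index bookkeeping together with the factorisation of the tensor inner product. The only point deserving care is definitional consistency — one must verify that the operator $\left<,\right>_{2^{\prime}}$, whose existence was merely assumed, genuinely acts on elementary tensors as $\left<\vec{a}\otimes\vec{b},\vec{y}_i\right>_{2^{\prime}}=\left<\vec{b},\vec{y}_i\right>_2\,\vec{a}$, i.e.\ that the tensor inner product underlying its definition is the standard factorised one; once that is granted the computation above is forced. It is also worth recording that the index $i$ of $\vec{y}_i$ may itself range over a countably infinite set, so that $\{\vec{\psi}_j\otimes\vec{y}_i\}$ is then a countable orthonormal system, in keeping with the infinite-dimensional setting motivating the paper.
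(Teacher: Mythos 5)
Your proof is correct and follows essentially the same route as the paper: identify the constraint vectors as \(\vec{\psi}_j\otimes\vec{y}_i\) and factor the tensor-product inner product as \(\left<\vec{\psi}_j\otimes\vec{y}_i,\vec{\psi}_l\otimes\vec{y}_m\right>=\left<\vec{\psi}_j,\vec{\psi}_l\right>_1\left<\vec{y}_i,\vec{y}_m\right>_2=\delta_{jl}\left<\vec{y}_i,\vec{y}_m\right>_2\). The paper's proof is exactly this one-line computation; your additional remarks on the action of \(\left<,\right>_{2^{\prime}}\) on elementary tensors are a reasonable elaboration but not a different argument.
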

\begin{proof}
\[\left<\vec{\psi}_r\otimes \vec{y}_i,\vec{\psi}_s\otimes\vec{y}_j\right>=\left<\vec{\psi}_r,\vec{\psi}_s\right>_1 \left<\vec{y}_i,\vec{y}_j\right>_2=\delta_{rs} \left<\vec{y}_i,\vec{y}_j\right>_2\]
\end{proof}

\begin{Lemma} \label{exp proj operator}
if \(\mathbf{H}_1\) is a separable Hilbert space, it follows that
\[\mathcal{P}_k\vec{x} =\vec{x}-\sum_{i=1}^{n}\left<\vec{x},\vec{y}_i\right>_{2^{\prime}}\otimes \vec{k}_i\]
is equivalent to the definition \ref{proj def 1}.
\end{Lemma}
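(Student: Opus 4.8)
The plan is to take Definition \ref{proj def 1} at face value, but applied in the tensor‑product setting with the countably infinite constraint family supplied by the first Lemma, and then show the resulting expression collapses to the stated one. Concretely, in the ambient space $\mathbf{H}_1\otimes\mathbf{H}_2$ the relevant constraint vectors are $\{\vec\psi_j\otimes\vec y_i : j\in\mathbb{N},\ i=1,\dots,n\}$, which the second Lemma shows to be orthonormal, and the matching $\vec k$‑vectors are $\{\vec\psi_j\otimes\vec k_i\}$, for which
\[
\left<\vec\psi_r\otimes\vec k_i,\vec\psi_s\otimes\vec y_j\right>=\left<\vec\psi_r,\vec\psi_s\right>_1\left<\vec k_i,\vec y_j\right>_2=\delta_{rs}\delta_{ij},
\]
exactly the requirement in Definition \ref{proj def 1}. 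Written out for this family, Definition \ref{proj def 1} reads
\[
\mathcal{P}_k\vec x=\vec x-\sum_{i=1}^n\sum_{j=1}^\infty(\vec\psi_j\otimes\vec k_i)\left<\vec x,\vec\psi_j\otimes\vec y_i\right>,
\]
so it suffices to prove that the double sum equals $\sum_{i=1}^n\left<\vec x,\vec y_i\right>_{2'}\otimes\vec k_i$.

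I would then fix $i$ and analyse the inner sum over $j$. Using the defining property of $\left<,\right>_{2'}$ each coefficient rewrites as $\left<\vec x,\vec\psi_j\otimes\vec y_i\right>=\left<\left<\vec x,\vec y_i\right>_{2'},\vec\psi_j\right>_1$, so the partial sums factor as
\[
\sum_{j=1}^N(\vec\psi_j\otimes\vec k_i)\left<\left<\vec x,\vec y_i\right>_{2'},\vec\psi_j\right>_1=\Bigl(\sum_{j=1}^N\left<\left<\vec x,\vec y_i\right>_{2'},\vec\psi_j\right>_1\vec\psi_j\Bigr)\otimes\vec k_i,
\]
the factorisation using bilinearity of $\otimes$ together with the fact that $\vec k_i$ is independent of $j$. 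Since $\mathbf{H}_1$ is separable, $\{\vec\psi_j\}$ is a genuine (countable) orthonormal basis, so the bracketed partial sums are the truncated Fourier expansion of $\left<\vec x,\vec y_i\right>_{2'}\in\mathbf{H}_1$ and converge in $\mathbf{H}_1$ to $\left<\vec x,\vec y_i\right>_{2'}$.

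The one step needing care is passing this limit through the tensor slot: the map $\vec u\mapsto\vec u\otimes\vec k_i$ is a bounded linear operator $\mathbf{H}_1\to\mathbf{H}_1\otimes\mathbf{H}_2$ of norm $\|\vec k_i\|_2$, hence continuous, so convergence of the bracketed sums to $\left<\vec x,\vec y_i\right>_{2'}$ forces $\sum_{j=1}^N(\vec\psi_j\otimes\vec k_i)\left<\vec x,\vec\psi_j\otimes\vec y_i\right>\to\left<\vec x,\vec y_i\right>_{2'}\otimes\vec k_i$. This is the only place separability (and the standing assumption that $\left<\vec x,\vec y_i\right>_{2'}$ exists) is actually used. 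Summing this over the finite range $i=1,\dots,n$ and subtracting from $\vec x$ then reproduces $\vec x-\sum_{i=1}^n\left<\vec x,\vec y_i\right>_{2'}\otimes\vec k_i$, establishing the equivalence. I expect the continuity/limit‑interchange argument to be the main obstacle; everything else is bookkeeping with bilinearity of $\otimes$, the defining identity for $\left<,\right>_{2'}$, and the basis expansion.
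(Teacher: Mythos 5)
Your argument is correct and is essentially the paper's own proof run in the opposite direction: the paper starts from the stated expression, expands \(\left<\vec{x},\vec{y}_i\right>_{2^{\prime}}\) in the orthonormal basis \(\vec{\psi}_j\) of \(\mathbf{H}_1\), and applies the defining identity of \(\left<,\right>_{2^{\prime}}\) to recover the form of Definition \ref{proj def 1} with constraint vectors \(\vec{\psi}_j\otimes\vec{y}_i\) and coefficient vectors \(\vec{\psi}_j\otimes\vec{k}_i\). Your explicit justification for passing the basis-expansion limit through the map \(\vec{u}\mapsto\vec{u}\otimes\vec{k}_i\) is a detail the paper leaves implicit, but it does not change the route.
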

\begin{proof}
\begin{align*}
\mathcal{P}_k\vec{x} & =\vec{x}-\sum_{i=1}^{n}\left<\vec{x},\vec{y}_i\right>_{2^{\prime}}\otimes \vec{k}_i \nonumber\\
 & =\vec{x}-\sum_{i=1}^{n}\sum_{j=0}^{\infty}\left<\left<\vec{x},\vec{y}_i\right>_{2^{\prime}},\vec{\psi}_j\right>_1\vec{\psi}_j\otimes \vec{k}_i \nonumber\\
 & =\vec{x}-\sum_{i=1}^{n}\sum_{j=0}^{\infty}\vec{\psi}_j\otimes \vec{k}_i\left<\vec{x},\vec{\psi}_j\otimes\vec{y}_i\right>
\end{align*}
\end{proof}

\begin{Lemma}
If \(\left<\vec{k}_i,\vec{y}_j\right>_2=\delta_{ij}\) then the condition on \(\vec{k}_i\) in definition \ref{proj def 1} is also satisfied
\end{Lemma}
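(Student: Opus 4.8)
The plan is to unwind what ``the condition on $\vec{k}_i$ in definition~\ref{proj def 1}'' actually means once we are in the tensor-product setting of Lemma~\ref{exp proj operator}. There the constraint family of definition~\ref{proj def 1} is the doubly-indexed set $\{\vec{\psi}_r\otimes\vec{y}_i\}$, and the expansion carried out in the proof of Lemma~\ref{exp proj operator} shows that the associated $\vec{k}$-vectors are correspondingly $\{\vec{\psi}_r\otimes\vec{k}_i\}$. Hence the relation $\left<\vec{k}_a,\vec{y}_b\right>=\delta_{ab}$ of definition~\ref{proj def 1}, read over the combined index $a=(r,i)$, $b=(s,j)$, is exactly the requirement
\[\left<\vec{\psi}_r\otimes\vec{k}_i,\ \vec{\psi}_s\otimes\vec{y}_j\right>=\delta_{rs}\,\delta_{ij}.\]

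First I would invoke the defining multiplicativity of the inner product on $\mathbf{H}_1\otimes\mathbf{H}_2$, namely $\left<\vec{a}\otimes\vec{b},\vec{c}\otimes\vec{d}\right>=\left<\vec{a},\vec{c}\right>_1\left<\vec{b},\vec{d}\right>_2$, exactly as in the orthonormality lemma above. Applying it here gives $\left<\vec{\psi}_r\otimes\vec{k}_i,\vec{\psi}_s\otimes\vec{y}_j\right>=\left<\vec{\psi}_r,\vec{\psi}_s\right>_1\left<\vec{k}_i,\vec{y}_j\right>_2$. Then I would substitute $\left<\vec{\psi}_r,\vec{\psi}_s\right>_1=\delta_{rs}$, which holds because $\{\vec{\psi}_r\}$ is an orthonormal basis of $\mathbf{H}_1$, together with the hypothesis $\left<\vec{k}_i,\vec{y}_j\right>_2=\delta_{ij}$, to obtain $\delta_{rs}\delta_{ij}$, which is precisely the Kronecker delta on the pair index. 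This establishes the claim.

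I do not expect any genuine obstacle here: the mathematical content is just bilinearity/multiplicativity of the tensor-product inner product plus orthonormality of the $\vec{\psi}_r$, and the only subtlety is the bookkeeping of identifying the single index of definition~\ref{proj def 1} with the pair index $(r,i)$ that arises once $\mathbf{H}_1$ is separable. If one wished to be thorough one could add that linear independence of the $\vec{\psi}_r\otimes\vec{k}_i$ then follows for free, since $\left<\vec{\psi}_r\otimes\vec{k}_i,\vec{\psi}_s\otimes\vec{y}_j\right>=\delta_{rs}\delta_{ij}$ forces any dependence relation, paired against a suitable $\vec{\psi}_s\otimes\vec{y}_j$, to have all coefficients zero; thus the full hypothesis set of Theorem~\ref{theorem of solution} is recovered from the single condition $\left<\vec{k}_i,\vec{y}_j\right>_2=\delta_{ij}$.
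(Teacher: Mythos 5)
Your proposal matches the paper's proof: both factor \(\left<\vec{\psi}_r\otimes\vec{k}_i,\vec{\psi}_s\otimes\vec{y}_j\right>\) into \(\left<\vec{\psi}_r,\vec{\psi}_s\right>_1\left<\vec{k}_i,\vec{y}_j\right>_2=\delta_{rs}\left<\vec{k}_i,\vec{y}_j\right>_2\) and then apply the hypothesis to recover the Kronecker delta over the combined index. You in fact spell out the final substitution and the index bookkeeping more explicitly than the paper does, but the argument is the same.
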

\begin{proof}
\[\left<\vec{\psi}_j\otimes \vec{k}_i,\vec{\psi}_s\otimes\vec{y}_r\right>=\left<\vec{\psi}_j,\vec{\psi}_s\right>_1 \left<\vec{k}_i,\vec{y}_r\right>_2=\delta_{js}\left<\vec{k}_i,\vec{y}_r\right>_2\]
\end{proof}

\begin{Lemma}
Given the orthonormal basis \(\vec{\psi}_i\) for the separable Hilbert space, it follows that the vectors \(\vec{\psi}_j\otimes \vec{k}_i\) are linearly independent provided that the vectors \(\vec{k}_i\) are linearly independent.
\end{Lemma}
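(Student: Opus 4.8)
The plan is to reduce the linear independence of the (in general infinite) family $\{\vec{\psi}_j\otimes\vec{k}_i\}$ to the assumed finite‑dimensional independence of the $\vec{k}_i$, using only the defining property of the tensor‑product inner product, $\langle\vec{a}\otimes\vec{b},\vec{c}\otimes\vec{d}\rangle=\langle\vec{a},\vec{c}\rangle_1\langle\vec{b},\vec{d}\rangle_2$, which has already been invoked in the preceding lemmas. Since a family of vectors is linearly independent exactly when each of its finite subfamilies is, it suffices to take scalars $c_{ij}$, only finitely many nonzero, with $\sum_{i,j}c_{ij}\,\vec{\psi}_j\otimes\vec{k}_i=\vec{0}$, and deduce that every $c_{ij}=0$.

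First I would regroup the sum by its first factor as $\sum_j\vec{\psi}_j\otimes\vec{v}_j$, where $\vec{v}_j:=\sum_{i=1}^{n}c_{ij}\,\vec{k}_i\in\mathbf{H}_2$ and the sum over $j$ is finite. Fixing an index $r$ and an arbitrary $\vec{u}\in\mathbf{H}_2$, I would pair both sides of the identity with $\vec{\psi}_r\otimes\vec{u}$; by orthonormality of the $\vec{\psi}_j$ the left-hand side collapses to $\langle\vec{v}_r,\vec{u}\rangle_2$, so $\langle\vec{v}_r,\vec{u}\rangle_2=0$ for every $\vec{u}$, which forces $\vec{v}_r=\vec{0}$. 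Hence $\sum_{i=1}^{n}c_{ir}\,\vec{k}_i=\vec{0}$ for each $r$, and the hypothesised linear independence of $\{\vec{k}_1,\dots,\vec{k}_n\}$ yields $c_{ir}=0$ for all $i$ and $r$, completing the proof. (Equivalently, the pairing step may be phrased through $\langle\cdot,\cdot\rangle_{2^{\prime}}$: the $\mathbf{H}_1$-valued quantity $\langle\sum_j\vec{\psi}_j\otimes\vec{v}_j,\vec{u}\rangle_{2^{\prime}}$ has inner product $\langle\vec{v}_r,\vec{u}\rangle_2$ with $\vec{\psi}_r$, so vanishing of the tensor forces $\vec{v}_r=\vec{0}$.)

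I do not expect a genuine obstacle here; the only points demanding care are bookkeeping ones. One must phrase the argument in terms of finite linear combinations so that it applies to the full, possibly infinite family, and one should note that it uses only the pairwise orthonormality of the $\vec{\psi}_j$ (indeed only their pairwise orthogonality and nonvanishing), not their completeness as a basis; separability of $\mathbf{H}_1$ merely guarantees that the index set for $j$ is countable, matching the countably infinite constraint family of the earlier lemmas, and is not otherwise used. A fully algebraic alternative would be to cite the standard fact that if $\{\vec{a}_j\}$ and $\{\vec{b}_i\}$ are each linearly independent then so is $\{\vec{a}_j\otimes\vec{b}_i\}$ in the algebraic tensor product, but in the present Hilbert-space setting the direct inner-product computation above is cleaner and self-contained.
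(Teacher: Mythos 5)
Your proposal is correct and follows essentially the same route as the paper: regroup the vanishing linear combination as $\sum_j\vec{\psi}_j\otimes\bigl(\sum_i c_{ij}\vec{k}_i\bigr)$ and reduce to the linear independence of the $\vec{k}_i$. The only difference is that you justify the key implication (that each $\mathbf{H}_2$-valued coefficient $\vec{v}_r$ must vanish) by pairing against $\vec{\psi}_r\otimes\vec{u}$, and you restrict to finite subfamilies to avoid convergence issues --- both steps the paper's one-line proof leaves implicit.
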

\begin{proof}
\[\vec{0}=\sum_{i=1}^{n}\sum_{j=0}^{\infty}\alpha_{ij}\vec{\psi}_j\otimes \vec{k}_i=\sum_{j=0}^{\infty}\vec{\psi}_j\otimes \left(\sum_{i=1}^{n}\alpha_{ij}\vec{k}_i\right)\Rightarrow \sum_{i=1}^{n}\alpha_{ij}\vec{k}_i=\vec{0}\]
\end{proof}

\section{Double Sum Reordering}
In order to manipulate infinite series of operators to obtain further proofs it is necessary to generalise notions of absolute convergence and investigate the reordering of double sums.
\newline
\begin{Definition}
We define \(\lim_{i,j\rightarrow \infty}a_{ij}=a\) if for all \(\varepsilon>0\) there exists \(\mu(\varepsilon)\) such that for all \(i,j>\mu(\varepsilon)\) we have \(\left|a-a_{ij}\right|<\varepsilon\). We generalise this to linear operators (\(\lim_{i,j\rightarrow \infty}\mathcal{X}_{i,j}=\mathcal{X}\)) with the variant condition \(\left\|\mathcal{X}-\mathcal{X}_{i,j}\right\|<\varepsilon\).
\end{Definition}
\quad
\begin{Theorem} \label{sum reordering theorem}
Assuming that \(\mathcal{X}_{i,j}\) is linear then the convergence of
\[\lim_{i,j\rightarrow \infty}\sum_{i=0}^{n}\sum_{j=0}^{m} \left\|\mathcal{X}_{i,j}\right\|\]
implies
\[\sum_{i=0}^{\infty}\sum_{j=0}^{\infty}\mathcal{X}_{i,j}=\sum_{j=0}^{\infty}\sum_{i=0}^{\infty}\mathcal{X}_{i,j}=\sum_{i=0}^{\infty}\mathcal{X}_{\sigma(i)}\]
where \(\sigma\) is an arbitrary bijection from the natural numbers (\(0\) to \(\infty\)) onto all pairs of natural numbers. These expressions all converge and in a generalised sense are absolutely convergent.
\end{Theorem}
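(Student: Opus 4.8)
\emph{Proof proposal.} The plan is to reduce the statement to the classical theorem that an absolutely convergent series is unconditionally convergent with a rearrangement-independent sum, and likewise may be summed by iterated sums (the ``Fubini for series''). The only extra ingredient is that the bounded linear operators on $\mathbf{H}$ form a Banach space under the operator norm, so that a series of operators whose norms are summable converges by the Cauchy criterion; I take each $\mathcal{X}_{i,j}$ to be bounded, as is implicit in the use of $\|\cdot\|$. The first step is to replace the hypothesis by the statement that $M:=\sup_{n,m}\sum_{i=0}^{n}\sum_{j=0}^{m}\|\mathcal{X}_{i,j}\|$ is finite. This is immediate, since the partial sums are nondecreasing in each index (the summands being nonnegative) and a monotone double array that converges in the sense of the preceding definition is bounded by its limit. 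In particular, for any finite set $F\subset\mathbb{N}\times\mathbb{N}$ one has $\sum_{(i,j)\in F}\|\mathcal{X}_{i,j}\|\le M$, so the sum of \emph{all} the norms, taken in any order, equals $M$; this is the asserted generalised absolute convergence, and it makes ``the mass of norms lying outside a finite set $F$'', namely $M-\sum_{(i,j)\in F}\|\mathcal{X}_{i,j}\|$, a meaningful nonnegative quantity.

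Next I would treat an arbitrary rearrangement. For a bijection $\sigma$ the partial sums $\sum_{k=0}^{K}\|\mathcal{X}_{\sigma(k)}\|$ are nondecreasing and bounded by $M$, hence convergent; completeness of the operator space then yields convergence of $\sum_{k}\mathcal{X}_{\sigma(k)}$ to some operator $\mathcal{S}_{\sigma}$, because $\bigl\|\sum_{k=K}^{L}\mathcal{X}_{\sigma(k)}\bigr\|\le\sum_{k=K}^{L}\|\mathcal{X}_{\sigma(k)}\|\to 0$. The same bound shows that each inner series $\sum_{j}\mathcal{X}_{i,j}=:\mathcal{R}_i$ converges with $\|\mathcal{R}_i\|\le r_i:=\sum_j\|\mathcal{X}_{i,j}\|$, that $\sum_i r_i\le M$, hence that $\sum_i\mathcal{R}_i$ converges; symmetrically for $\sum_j\sum_i$; and that the rectangular partial sums $\sum_{i\le n}\sum_{j\le m}\mathcal{X}_{i,j}$ converge, in the double-limit sense, to some operator $\mathcal{S}$.

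It remains to see that all of these limits agree. Fix $\varepsilon>0$ and, using finiteness of $M$, choose $N$ with $\sum_{i\le N}\sum_{j\le N}\|\mathcal{X}_{i,j}\|>M-\varepsilon$, so that the rectangle $R=\{0,\dots,N\}^2$ carries all but $\varepsilon$ of the norm mass. For any bijection $\sigma$, taking $K$ with $\{\sigma(0),\dots,\sigma(K)\}\supseteq R$ gives $\bigl\|\mathcal{S}_\sigma-\sum_{i\le n}\sum_{j\le m}\mathcal{X}_{i,j}\bigr\|$ controlled by the mass outside $R$ whenever $\{0,\dots,n\}\times\{0,\dots,m\}\supseteq R$, since the two partial sums share the block over $R$ and differ only in finitely many terms indexed outside $R$; letting first the rearrangement index and then $n,m$ grow shows $\mathcal{S}=\mathcal{S}_\sigma$. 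Running the same comparison between two bijections $\sigma,\tau$ shows $\mathcal{S}_\sigma=\mathcal{S}_\tau$, and between $\sum_{i\le n}\mathcal{R}_i$ (whose distance to $\sum_{i\le n}\sum_{j\le m}\mathcal{X}_{i,j}$ is at most $\sum_{i\le n}\sum_{j>m}\|\mathcal{X}_{i,j}\|\to0$ as $m\to\infty$ for fixed $n$) and the rectangular sums shows $\sum_i\mathcal{R}_i=\mathcal{S}$; by symmetry $\sum_j\sum_i\mathcal{X}_{i,j}=\mathcal{S}$ as well.

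I expect the only real friction to be bookkeeping: keeping the Pringsheim double limit, the iterated limits, and the rearranged limits apart, and making the ``mass outside the rectangle'' comparison precise and uniform in each of the three cases. The passage from scalars to operators costs nothing beyond invoking completeness of the operator space and the triangle inequality for $\|\cdot\|$, so no genuinely new idea enters there; the argument is the operator analogue of the standard proof for $\mathbb{R}$ or $\mathbb{C}$.
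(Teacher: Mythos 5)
Your proposal is correct, but it takes a genuinely different route from the paper. The paper's proof applies each partial sum to an arbitrary vector \(\vec{x}\), bounds \(\sum_{i\le n}\sum_{j\le m}\left|\mathcal{X}_{i,j}\vec{x}\right|\) by \(\left|\vec{x}\right|\sum_{i\le n}\sum_{j\le m}\left\|\mathcal{X}_{i,j}\right\|\), and then cites the classical rearrangement theorem for absolutely convergent double series (``it is well known that, in such cases\dots'') to equate the three sums evaluated at \(\vec{x}\); the operator identity is then read off ``since \(\vec{x}\) is arbitrary''. You instead work directly in the Banach space of bounded operators: you extract the finite total mass \(M\), use completeness and the Cauchy criterion to obtain norm convergence of the rearranged, iterated, and rectangular sums separately, and then run the standard ``all but \(\varepsilon\) of the norm mass lies in a finite rectangle'' comparison to identify all the limits. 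Your version is more self-contained and in one respect more careful: the paper's own definition of operator limits demands convergence in the operator norm, whereas the test-vector argument as literally written only delivers strong (vectorwise) convergence, and it is precisely your appeal to completeness of the operator space under the hypothesis \(\sum_{i}\sum_{j}\left\|\mathcal{X}_{i,j}\right\|<\infty\) that closes this gap. The paper's version is shorter because it outsources the combinatorial bookkeeping to the known scalar result; the final claim about reordering the series of norms, which the paper disposes of by noting all terms are nonnegative reals, is likewise covered by your observation that every finite partial sum of norms is bounded by \(M\).
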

\begin{proof}
\[\sum_{i=0}^n\sum_{j=0}^m\left|\mathcal{X}_{i,j}\vec{x}\right|=\left|\vec{x}\right|\sum_{i=0}^n\sum_{j=0}^m\left|\mathcal{X}_{i,j}\hat{x}\right|\leqslant\left|\vec{x}\right|\sum_{i=0}^n\sum_{j=0}^m\left\|\mathcal{X}_{i,j}\right\|\]
This implies that, for all \(\vec{x}\), \(\lim_{i,j\rightarrow \infty}\sum_{i=0}^{n}\sum_{j=0}^{m}\left|\mathcal{X}_{i,j}\vec{x}\right|\) is convergent if \linebreak\(\lim_{i,j\rightarrow \infty}\sum_{i=0}^{n}\allowbreak\sum_{j=0}^{m} \allowbreak\left\|\mathcal{X}_{i,j}\right\|\) is. It is well known that, in such cases,
\[\sum_{i=0}^{\infty}\sum_{j=0}^{\infty}\mathcal{X}_{i,j}\vec{x}=\sum_{j=0}^{\infty}\sum_{i=0}^{\infty}\mathcal{X}_{i,j}\vec{x}=\sum_{i=0}^{\infty}\mathcal{X}_{\sigma(i)}\vec{x}\]
are all absolutely convergent. However, since \(\vec{x}\) is arbitrary this implies that
\begin{equation}
\sum_{i=0}^{\infty}\sum_{j=0}^{\infty}\mathcal{X}_{i,j}=\sum_{j=0}^{\infty}\sum_{i=0}^{\infty}\mathcal{X}_{i,j}=\sum_{i=0}^{\infty}\mathcal{X}_{\sigma(i)}\label{sum reordering}
\end{equation}
Since, by assumption, \(\sum_{i=0}^{\infty}\sum_{j=0}^{\infty}\left\|\mathcal{X}_{i,j}\right\|<\infty\), this expression's summation terms may be reordered in the same way, thus demonstrating that the reordered sums \eqref{sum reordering} also possess this generalised notion of absolute convergence.
\end{proof}
\begin{Corollary} \label{cauchy ordering corol}
Assuming that \(\mathcal{X}_{i,j}\) is linear, then the identity
\[\sum_{i=0}^{\infty}\sum_{j=0}^{i}\mathcal{X}_{j,i-j}=\sum_{i=0}^{\infty}\sum_{j=0}^{\infty}\mathcal{X}_{i,j}=\sum_{j=0}^{\infty}\sum_{i=0}^{\infty}\mathcal{X}_{i,j}\]
is valid if any one of its forms is absolutely convergent which implies they all are.
\end{Corollary}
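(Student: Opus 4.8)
The plan is to realise the Cauchy (anti‑diagonal) arrangement as one particular instance of the bijective rearrangement $\sum_{i=0}^{\infty}\mathcal{X}_{\sigma(i)}$ furnished by Theorem \ref{sum reordering theorem}, and then to check that each of the three notions of absolute convergence entering the statement is equivalent to the hypothesis of that theorem. First I would fix the bijection $\sigma:\mathbb{N}\to\mathbb{N}\times\mathbb{N}$ that enumerates the lattice points by successive anti‑diagonals, so that $0,1,2,3,\ldots$ are sent to $(0,0),(0,1),(1,0),(0,2),(1,1),(2,0),\ldots$; then the anti‑diagonal block indexed by $i$ has exactly $i+1$ members $(j,i-j)$, $j=0,\ldots,i$, and inserting parentheses around consecutive blocks of sizes $1,2,3,\ldots$ in $\sum_{i}\mathcal{X}_{\sigma(i)}$ reproduces verbatim $\sum_{i=0}^{\infty}\sum_{j=0}^{i}\mathcal{X}_{j,i-j}$.

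Next I would argue the equivalence of the absolute‑convergence conditions. Every lattice point occurs once and only once in each of the three displayed arrangements, so the associated series of norms — $\sum_{i}\sum_{j}\|\mathcal{X}_{i,j}\|$, $\sum_{j}\sum_{i}\|\mathcal{X}_{i,j}\|$, and $\sum_{i}\sum_{j=0}^{i}\|\mathcal{X}_{j,i-j}\|$ — are all series of non‑negative reals whose terms form the same set; hence each one equals the supremum of its finite partial sums, and each such finite partial sum is dominated by some square partial sum $\sum_{i\le n}\sum_{j\le m}\|\mathcal{X}_{i,j}\|$ while, conversely, every square partial sum appears among the finite partial sums of each arrangement (a square is a finite set of lattice points, and every finite set lies in some square). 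Therefore finiteness of any one of the three forces $\lim_{n,m\to\infty}\sum_{i\le n}\sum_{j\le m}\|\mathcal{X}_{i,j}\|$ to exist, which is precisely the hypothesis of Theorem \ref{sum reordering theorem}. That theorem then delivers $\sum_{i}\sum_{j}\mathcal{X}_{i,j}=\sum_{j}\sum_{i}\mathcal{X}_{i,j}=\sum_{i}\mathcal{X}_{\sigma(i)}$ together with the convergence of all three; since $\sum_{i}\mathcal{X}_{\sigma(i)}$ converges, grouping its terms into the finite anti‑diagonal blocks does not disturb the limit, and the grouped series is exactly $\sum_{i=0}^{\infty}\sum_{j=0}^{i}\mathcal{X}_{j,i-j}$. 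The ``implies they all are'' clause then follows from the first paragraph above combined with the final sentence of Theorem \ref{sum reordering theorem}, which propagates the generalised absolute convergence to every rearrangement and hence to every grouping of one.

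The step I expect to need the most care is this equivalence of the several ``absolutely convergent'' statements: Theorem \ref{sum reordering theorem} is phrased with the two‑index limit of square partial sums, whereas the corollary speaks of iterated sums and of the anti‑diagonal sum, so one must be explicit that, for non‑negative terms, all of these cofinal families of finite partial sums share a common supremum. The remaining ingredients — writing down the anti‑diagonal bijection and observing that parenthesising a convergent series of operators preserves its sum — are routine.
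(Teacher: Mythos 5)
Your proposal is correct and follows essentially the same route as the paper: both realise the anti-diagonal arrangement as the rearrangement $\sum_{i}\mathcal{X}_{\sigma(i)}$ of Theorem \ref{sum reordering theorem} and use non-negativity of the norms to transfer absolute convergence between the three forms. You are in fact somewhat more careful than the paper on two points it glosses over --- the equivalence of the iterated/anti-diagonal norm sums with the two-index square-partial-sum hypothesis of the theorem, and the fact that passing from $\sum_{i}\mathcal{X}_{\sigma(i)}$ to $\sum_{i=0}^{\infty}\sum_{j=0}^{i}\mathcal{X}_{j,i-j}$ is a grouping of a convergent series --- but the underlying argument is the same.
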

\begin{proof}
If we take
\begin{align*}
  \sigma(i)=&\left(i-\frac{1}{2}\left(\sqrt{\frac{8i+1}{4}}-\frac{1}{2}\right)^2-\frac{1}{2}\left\lfloor\sqrt{\frac{8i+1}{4}}-\frac{1}{2}\right\rfloor,\right.    \\
    &  \left.\frac{3}{2}\left\lfloor\sqrt{\frac{8i+1}{4}}-\frac{1}{2}\right\rfloor-i+\frac{1}{2}\left(\sqrt{\frac{8i+1}{4}}-\frac{1}{2}\right)^2\right)
\end{align*}
we then obtain (from theorem \ref{sum reordering theorem}) the identity
\[\sum_{i=0}^{\infty}\sum_{j=0}^{i}\mathcal{X}_{j,i-j}=\sum_{i=0}^{\infty}\mathcal{X}_{\sigma(i)}=\sum_{i=0}^{\infty}\sum_{j=0}^{\infty}\mathcal{X}_{i,j}=\sum_{j=0}^{\infty}\sum_{i=0}^{\infty}\mathcal{X}_{i,j}\]
 The absolute convergence of one form implies the validity of the identity and the absolute convergence of the other forms is observed by noting the validity of the identity
\[\sum_{i=0}^{\infty}\sum_{j=0}^{i}\left\|\mathcal{X}_{j,i-j}\right\|=\sum_{i=0}^{\infty}\left\|\mathcal{X}_{\sigma(i)}\right\|=\sum_{i=0}^{\infty}\sum_{j=0}^{\infty}\left\|\mathcal{X}_{i,j}\right\|=\sum_{j=0}^{\infty}\sum_{i=0}^{\infty}\left\|\mathcal{X}_{i,j}\right\|\]
as all expressions are sums of real non negative numbers.
\end{proof}

\begin{Lemma} \label{Cauchy Product lemma}
If \(\lim_{n\rightarrow\infty}\sum_{i=0}^{n}\left\|\mathcal{X}_{i}\right\|<\infty\) and \(\lim_{m\rightarrow\infty}\sum_{j=0}^{m}\left\|\mathcal{Y}_j\right\|<\infty\) then \linebreak\(\lim_{n,m\rightarrow \infty}\sum_{i=0}^{n}\sum_{j=0}^{m}\left\|\mathcal{X}_{i}\mathcal{Y}_j\right\|\) exists and we have the identity
\begin{equation*}
\sum_{i=0}^{\infty}\sum_{j=0}^{i}\mathcal{X}_{j}\mathcal{Y}_{i-j}=\sum_{i=0}^{\infty}\sum_{j=0}^{\infty}\mathcal{X}_{i}\mathcal{Y}_j
\end{equation*}
This reordering is also absolutely convergent in the sense that we have defined.
\end{Lemma}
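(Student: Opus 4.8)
The plan is to reduce everything to the two results already proven — Theorem~\ref{sum reordering theorem} and Corollary~\ref{cauchy ordering corol} — by setting \(\mathcal{X}_{i,j} := \mathcal{X}_i\mathcal{Y}_j\) and verifying that the single hypothesis of Theorem~\ref{sum reordering theorem}, namely the convergence of the double sequence of norm-partial-sums, is met. Once that is done, the identity and the generalised absolute convergence come out of the two cited results almost verbatim.

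First I would invoke submultiplicativity of the operator norm, \(\left\|\mathcal{X}_i\mathcal{Y}_j\right\|\leqslant\left\|\mathcal{X}_i\right\|\left\|\mathcal{Y}_j\right\|\), to bound the partial sums: for every \(n,m\),
\[
\sum_{i=0}^{n}\sum_{j=0}^{m}\left\|\mathcal{X}_i\mathcal{Y}_j\right\|\leqslant\left(\sum_{i=0}^{n}\left\|\mathcal{X}_i\right\|\right)\left(\sum_{j=0}^{m}\left\|\mathcal{Y}_j\right\|\right)\leqslant\left(\sum_{i=0}^{\infty}\left\|\mathcal{X}_i\right\|\right)\left(\sum_{j=0}^{\infty}\left\|\mathcal{Y}_j\right\|\right)<\infty .
\]
The double sequence \(b_{n,m}:=\sum_{i=0}^{n}\sum_{j=0}^{m}\left\|\mathcal{X}_i\mathcal{Y}_j\right\|\) is nondecreasing in each index (only non-negative terms are added) and, by the display above, bounded. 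I would then show that any bounded, coordinatewise-nondecreasing double sequence of reals converges, in the sense of the Definition preceding Theorem~\ref{sum reordering theorem}, to its supremum \(b:=\sup_{n,m}b_{n,m}\): given \(\varepsilon>0\), choose \(n_0,m_0\) with \(b_{n_0,m_0}>b-\varepsilon\); then for all \(n,m>\max(n_0,m_0)\) monotonicity gives \(b-\varepsilon<b_{n_0,m_0}\leqslant b_{n,m}\leqslant b\), so \(\left|b-b_{n,m}\right|<\varepsilon\) and \(\mu(\varepsilon)=\max(n_0,m_0)\) works. This establishes the stated existence of \(\lim_{n,m\to\infty}\sum_{i=0}^{n}\sum_{j=0}^{m}\left\|\mathcal{X}_i\mathcal{Y}_j\right\|\).

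With that hypothesis secured, Theorem~\ref{sum reordering theorem} applied to \(\mathcal{X}_{i,j}=\mathcal{X}_i\mathcal{Y}_j\) gives \(\sum_{i=0}^{\infty}\sum_{j=0}^{\infty}\mathcal{X}_i\mathcal{Y}_j=\sum_{j=0}^{\infty}\sum_{i=0}^{\infty}\mathcal{X}_i\mathcal{Y}_j=\sum_{i=0}^{\infty}\mathcal{X}_{\sigma(i)}\), all absolutely convergent in the generalised sense; and Corollary~\ref{cauchy ordering corol}, applied to the same \(\mathcal{X}_{i,j}\) (whose absolute convergence we have just verified), supplies the Cauchy-product reordering \(\sum_{i=0}^{\infty}\sum_{j=0}^{i}\mathcal{X}_j\mathcal{Y}_{i-j}=\sum_{i=0}^{\infty}\sum_{j=0}^{\infty}\mathcal{X}_i\mathcal{Y}_j\) together with the assertion that this rearrangement is itself absolutely convergent in the defined sense. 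Chaining these two facts yields the Lemma.

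I do not expect a serious obstacle. The only mild subtlety is the \(\varepsilon\)–\(\mu\) check that a bounded, coordinatewise-monotone double sequence converges in the paper's non-iterated sense, since this is precisely the form in which Theorem~\ref{sum reordering theorem} demands its hypothesis; and one should note that the submultiplicativity step is legitimate because the \(\mathcal{X}_i\) and \(\mathcal{Y}_j\) are bounded linear operators and the operator norm is submultiplicative. Everything else is bookkeeping — matching notation so that the earlier Theorem and Corollary can be quoted directly.
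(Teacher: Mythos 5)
Your proposal is correct and follows essentially the same route as the paper: bound the partial sums via submultiplicativity of the operator norm, conclude that \(\lim_{n,m\rightarrow\infty}\sum_{i=0}^{n}\sum_{j=0}^{m}\left\|\mathcal{X}_{i}\mathcal{Y}_j\right\|\) exists, and then appeal to Corollary~\ref{cauchy ordering corol}. The only difference is that you spell out the \(\varepsilon\)--\(\mu\) argument that a bounded, coordinatewise-nondecreasing double sequence converges in the paper's sense, a step the paper dismisses as ``clear''.
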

\begin{proof}
Considering
\[\sum_{i=0}^{n}\sum_{j=0}^{m}\left\|\mathcal{X}_{i}\mathcal{Y}_j\right\|\leqslant \sum_{i=0}^{n}\sum_{j=0}^{m}\left\|\mathcal{X}_{i}\right\|\left\|\mathcal{Y}_j\right\|=\sum_{i=0}^{n}\left\|\mathcal{X}_{i}\right\|\sum_{j=0}^{m}\left\|\mathcal{Y}_j\right\|,\]
it is clear that if \(\lim_{n\rightarrow\infty}\sum_{i=0}^{n}\left\|\mathcal{X}_{i}\right\|<\infty\) and \(\lim_{m\rightarrow\infty}\sum_{j=0}^{m}\left\|\mathcal{Y}_j\right\|<\infty\) then \linebreak\(\lim_{n,m\rightarrow \infty}\sum_{i=0}^{n}\sum_{j=0}^{m}\left\|\mathcal{X}_{i}\mathcal{Y}_j\right\|\) converges. We then simply appeal to Corollary \ref{cauchy ordering corol}.
\end{proof}

\section{Further Lemmas}
It is expedient in examining perturbation-like expansions on \(k\) to derive some lemmas about the convergence and equivalence of various infinite sums of operators on Hilbert spaces.
\newline
\begin{Lemma} \label{operator equivalence lemma}
Let \(\mathcal{X}\) and \(\mathcal{Y}\) be linear operators on a Hilbert space and let
 \begin{equation*}
\left\| \mathcal{X}\right\|<1 \Leftrightarrow \sum_{i=0}^{\infty}\left\| \mathcal{X}\right\|^i=\frac{1}{1-\left\| \mathcal{X}\right\|}<\infty .
\end{equation*}
Let us denote \(\sum_{i=0}^{\infty} \mathcal{X}^i=\overline{\mathcal{X}}\). If we define \(\sum_{i=0}^{\infty}\left(\mathcal{X}+ \mathcal{Y}\right)^i= \mathcal{Z}\) and \(\overline{\mathcal{X}}\sum_{i=0}^{\infty}\left(\mathcal{Y}\overline{\mathcal{X}}\right)^i=\overline{\mathcal{Z}}\) then when \(\left\| \mathcal{Y}\right\|<1-\left\| \mathcal{X}\right\|\) both \(\mathcal{Z}\) and \(\overline{\mathcal{Z}}\) converge and \(\mathcal{Z}=\overline{\mathcal{Z}}\)
\end{Lemma}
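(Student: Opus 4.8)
The plan is to prove the statement in two stages: first, to establish that both $\mathcal{Z}$ and $\overline{\mathcal{Z}}$ converge, absolutely in the generalised sense of absolute convergence introduced above, under the hypothesis $\|\mathcal{Y}\|<1-\|\mathcal{X}\|$; and second, to show that, after formally expanding every power of $\mathcal{X}+\mathcal{Y}$ and every factor $\overline{\mathcal{X}}$, both $\mathcal{Z}$ and $\overline{\mathcal{Z}}$ are reorderings of one and the same ``grand sum'' taken over all finite ordered words in the two-symbol alphabet $\{\mathcal{X},\mathcal{Y}\}$. The identity $\mathcal{Z}=\overline{\mathcal{Z}}$ then follows because, by the reordering results already proved, a series that is absolutely convergent in this generalised sense has a value independent of the order of summation.

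For convergence, note that $\|\mathcal{X}+\mathcal{Y}\|\leqslant\|\mathcal{X}\|+\|\mathcal{Y}\|<1$, so the Neumann series defining $\mathcal{Z}$ converges and $\sum_{i=0}^{\infty}\|(\mathcal{X}+\mathcal{Y})^{i}\|\leqslant\sum_{i=0}^{\infty}(\|\mathcal{X}\|+\|\mathcal{Y}\|)^{i}<\infty$, i.e. $\mathcal{Z}$ is absolutely convergent. For $\overline{\mathcal{Z}}$, the usual Neumann bound gives $\|\overline{\mathcal{X}}\|\leqslant(1-\|\mathcal{X}\|)^{-1}$, hence $\|\mathcal{Y}\,\overline{\mathcal{X}}\|\leqslant\|\mathcal{Y}\|/(1-\|\mathcal{X}\|)<1$, and it is here that the hypothesis $\|\mathcal{Y}\|<1-\|\mathcal{X}\|$ enters; thus $\sum_{i=0}^{\infty}(\mathcal{Y}\,\overline{\mathcal{X}})^{i}$ is absolutely convergent, and multiplying on the left by the absolutely convergent series $\overline{\mathcal{X}}$ and invoking Lemma \ref{Cauchy Product lemma} shows $\overline{\mathcal{Z}}$ is absolutely convergent as well.

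For the identification, expand $(\mathcal{X}+\mathcal{Y})^{i}=\sum_{|w|=i}w$, the sum running over the $2^{i}$ ordered words of length $i$; since a word $w$ with $a$ letters $\mathcal{X}$ and $b$ letters $\mathcal{Y}$ satisfies $\|w\|\leqslant\|\mathcal{X}\|^{a}\|\mathcal{Y}\|^{b}$, the total of norms over all words of all lengths is at most $\sum_{i=0}^{\infty}(\|\mathcal{X}\|+\|\mathcal{Y}\|)^{i}<\infty$, so by Theorem \ref{sum reordering theorem} the grand sum $\mathcal{S}$ of all finite words converges absolutely and is order-independent; in particular $\mathcal{Z}=\mathcal{S}$. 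On the other hand, $\overline{\mathcal{X}}(\mathcal{Y}\,\overline{\mathcal{X}})^{i}=\overline{\mathcal{X}}\,\mathcal{Y}\,\overline{\mathcal{X}}\,\mathcal{Y}\cdots\mathcal{Y}\,\overline{\mathcal{X}}$ with $i$ copies of $\mathcal{Y}$, and replacing each $\overline{\mathcal{X}}$ by $\sum_{j\geqslant0}\mathcal{X}^{j}$ turns this into $\sum_{j_{0},\dots,j_{i}\geqslant0}\mathcal{X}^{j_{0}}\mathcal{Y}\mathcal{X}^{j_{1}}\mathcal{Y}\cdots\mathcal{Y}\mathcal{X}^{j_{i}}$; as $i$ runs over $0,1,2,\dots$ and each $j_{r}$ over the nonnegative integers, the words $\mathcal{X}^{j_{0}}\mathcal{Y}\cdots\mathcal{Y}\mathcal{X}^{j_{i}}$ run through every finite word in $\{\mathcal{X},\mathcal{Y}\}$ exactly once, the datum $(i;j_{0},\dots,j_{i})$ simply recording the number of $\mathcal{Y}$'s and the lengths of the intervening runs of $\mathcal{X}$'s. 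The associated total of norms is $\sum_{i\geqslant0}\|\mathcal{Y}\|^{i}(1-\|\mathcal{X}\|)^{-(i+1)}$, finite again precisely when $\|\mathcal{Y}\|<1-\|\mathcal{X}\|$, so $\overline{\mathcal{Z}}$ is, after expansion, another absolutely convergent rearrangement of $\mathcal{S}$, giving $\overline{\mathcal{Z}}=\mathcal{S}=\mathcal{Z}$.

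I expect the main obstacle to be the bookkeeping for $\overline{\mathcal{Z}}$: its $i$-th summand carries a variable number $i+1$ of inner indices, so a single application of Theorem \ref{sum reordering theorem} or Corollary \ref{cauchy ordering corol} does not suffice. One must either verify directly that $(i;j_{0},\dots,j_{i})\mapsto\mathcal{X}^{j_{0}}\mathcal{Y}\cdots\mathcal{Y}\mathcal{X}^{j_{i}}$ is a bijection onto the set of finite words, together with the norm bound just noted, and then appeal to order-independence under generalised absolute convergence; or else proceed inductively, peeling off one factor $\overline{\mathcal{X}}$ at a time with Lemma \ref{Cauchy Product lemma}. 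Either way, once $\sum_{w}\|w\|<\infty$ is secured the conclusion $\mathcal{Z}=\overline{\mathcal{Z}}$ is merely the statement that reordering leaves an absolutely convergent sum unchanged. An alternative route that sidesteps the combinatorics is to check, via the telescoping identities $\overline{\mathcal{X}}(\mathcal{I}-\mathcal{X})=\mathcal{I}$ and $\big(\sum_{i}(\mathcal{Y}\overline{\mathcal{X}})^{i}\big)(\mathcal{I}-\mathcal{Y}\overline{\mathcal{X}})=\mathcal{I}$, that both $\mathcal{Z}$ and $\overline{\mathcal{Z}}$ are two-sided inverses of the invertible operator $\mathcal{I}-\mathcal{X}-\mathcal{Y}$; but this does not exploit the reordering machinery of the preceding section, which the present development seems designed to use.
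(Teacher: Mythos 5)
Your proof is correct and follows essentially the same route as the paper: both establish convergence from \(\left\|\mathcal{X}+\mathcal{Y}\right\|<1\) and \(\left\|\mathcal{Y}\overline{\mathcal{X}}\right\|<1\), then expand everything into words in \(\mathcal{X}\) and \(\mathcal{Y}\) and identify the two series as reorderings of the same absolutely convergent expansion. The bookkeeping difficulty you flag is resolved in the paper by exactly the second option you name --- an induction peeling off one \(\overline{\mathcal{X}}\) factor at a time via Lemma \ref{Cauchy Product lemma}, followed by a single reordering of the resulting double sum using Theorem \ref{sum reordering theorem}.
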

\begin{proof}
First observe that
\[\left\| \mathcal{Y}\right\|<1-\left\| \mathcal{X}\right\|\Rightarrow\left\| \mathcal{X}+ \mathcal{Y}\right\|\leqslant \left\| \mathcal{X}\right\|+\left\| \mathcal{Y}\right\|<1\]
implies that \(\mathcal{Z}\) converges. Also observe that
\[\left\| \mathcal{Y}\right\|<1-\left\| \mathcal{X}\right\|\Rightarrow\left\| \mathcal{Y}\overline{\mathcal{X}}\right\|\leqslant\left\| \mathcal{Y}\right\|\left\|\overline{\mathcal{X}}\right\|\leqslant\left\| \mathcal{Y}\right\|\sum_{i=0}^{\infty}\left\| \mathcal{X}\right\|^i=\frac{\left\| \mathcal{Y}\right\|}{1-\left\| \mathcal{X}\right\|}<1\]
implies that \(\overline{\mathcal{Z}}\) converges. Hence \(\left\| \mathcal{Y}\right\|<1-\left\| \mathcal{X}\right\|\) implies that both \(\mathcal{Z}\) and \(\overline{\mathcal{Z}}\) converge, as well as ensuring the convergence of \(\sum_{i=0}^{\infty} \mathcal{Y}^i=\overline{\mathcal{Y}}\).

By virtue of lemma \ref{Cauchy Product lemma}, and assuming that \(\left\| \mathcal{Y}\right\|<1-\left\| \mathcal{X}\right\|\), it follows that
\[\overline{\mathcal{X}} \mathcal{Y}\overline{\mathcal{X}}=\sum_{i=0}^{\infty} \mathcal{X}^i \mathcal{Y}\sum_{i^{\prime}=0}^{\infty} \mathcal{X}^{i^{\prime}}=\sum_{i=0}^{\infty}\sum_{j=0}^{i} \mathcal{X}^{i-j} \mathcal{Y} \mathcal{X}^{j}\]
converges absolutely. Assuming the absolute convergence of \(\sum_{i=0}^{\infty}\sum_{j_0+\cdots+j_n=i} \mathcal{X}^{j_0}\allowbreak\prod_{k=1}^{n} \mathcal{Y} \mathcal{X}^{j_k}\), and again using lemma \ref{Cauchy Product lemma}, it follows that
\begin{align*}
\left(\sum_{i=0}^{\infty}\sum_{j_0+\cdots+j_n=i} \mathcal{X}^{j_0}\prod_{k=1}^{n} \mathcal{Y} \mathcal{X}^{j_k}\right) \mathcal{Y}\sum_{i^{\prime}=0}^{\infty} \mathcal{X}^{i^{\prime}} & =\sum_{i=0}^{\infty}\sum_{j_0+\cdots+j_n=i} \mathcal{X}^{j_0}\left(\prod_{k=1}^{n} \mathcal{Y} \mathcal{X}^{j_k}\right) \mathcal{Y}\sum_{i^{\prime}=0}^{\infty} \mathcal{X}^{i^{\prime}} \\
& \hspace{-4em} =\sum_{i=0}^{\infty}\sum_{j_{n+1}=0}^{i}\sum_{j_0+\cdots+j_n=i-j_{n+1}}\mathcal{X}^{j_0}\left(\prod_{k=1}^{n} \mathcal{Y} \mathcal{X}^{j_k}\right) \mathcal{Y} \mathcal{X}^{j_{n+1}} \\
& =\sum_{i=0}^{\infty}\sum_{j_0+\cdots+j_n+j_{n+1}=i} \mathcal{X}^{j_0}\left(\prod_{k=1}^{n+1} \mathcal{Y} \mathcal{X}^{j_k}\right)
\end{align*}
converges absolutely. An immediate result of this is
\begin{align*}
\overline{\mathcal{Z}} & =\overline{\mathcal{X}}\sum_{i=0}^{\infty}\left(\mathcal{Y}\overline{\mathcal{X}}\right)^i  \nonumber\\
    &  =\sum_{i=0}^{\infty}\overline{\mathcal{X}}\left(\mathcal{Y}\overline{\mathcal{X}}\right)^i \nonumber\\
    & =\sum_{i=0}^{\infty}\sum_{i^{\prime}=0}^{\infty}\sum_{j_0+\cdots+j_i=i^{\prime}} \mathcal{X}^{j_0}\left(\prod_{k=1}^i \mathcal{Y} \mathcal{X}^{j_k}\right) \nonumber\\
    & =\sum_{i^{\prime\prime}=0}^{\infty}\sum_{j^{\prime}=0}^{i^{\prime\prime}}\sum_{j_0+\cdots+j_i=i^{\prime\prime}-j^{\prime}} \mathcal{X}^{j_0}\left(\prod_{k=1}^{j^{\prime}} \mathcal{Y} \mathcal{X}^{j_k}\right) \nonumber\\
    & =\sum_{i^{\prime\prime}=0}^{\infty}\left(\mathcal{X}+ \mathcal{Y}\right)^{i^{\prime\prime}} \nonumber\\
    &  = \mathcal{Z}
\end{align*}
where we have used theorem \ref{sum reordering theorem}. Thus, when \(\left\| \mathcal{Y}\right\|<1-\left\| \mathcal{X}\right\|\), \(\mathcal{Z}\) and \(\overline{\mathcal{Z}}\) are equivalent operators.
\end{proof}

\section{Regions of \(k\) Giving Solutions}
Given a solution \(k\) to \(\tilde{\mathcal{P}}_k\sum_{i=0}^{\infty}(\mathcal{A}\mathcal{P}_k)^i\vec{\varphi}=\vec{0}\), we seek to examine the properties of alternative values for \(k^{\prime}\) in the neighbourhood of \(k\).
\newline
\begin{Definition} \label{k prime def}
We define \(\vec{k}^{\prime}_i=\vec{k}_i+\varepsilon_{\eta}\vec{\eta}_i\) subject to \(\left|\vec{\eta}_i\right|=1\) and \(\left<\vec{\eta}_i,\vec{y}_j\right>_2=\vec{0}\) for all \(i\) and \(j\).
\end{Definition}
\quad
\begin{Lemma} \label{operator split lemma}
It follows from definition \ref{k prime def} and lemma \ref{exp proj operator} directly 
\[\mathcal{P}_{k^{\prime}}\vec{x}=\mathcal{P}_k\vec{x}-\varepsilon_{\eta}\tilde{\mathcal{P}}_{\eta}\vec{x}\]
\end{Lemma}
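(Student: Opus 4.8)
The plan is to substitute the perturbed vectors from Definition \ref{k prime def} directly into the tensor‑product expression for the projection‑like operator supplied by Lemma \ref{exp proj operator}, and then to separate the resulting finite sum using bilinearity of $\otimes$. Concretely, I would start from
\[
\mathcal{P}_{k^{\prime}}\vec{x}=\vec{x}-\sum_{i=1}^{n}\left<\vec{x},\vec{y}_i\right>_{2^{\prime}}\otimes\vec{k}^{\prime}_i =\vec{x}-\sum_{i=1}^{n}\left<\vec{x},\vec{y}_i\right>_{2^{\prime}}\otimes\left(\vec{k}_i+\varepsilon_{\eta}\vec{\eta}_i\right),
\]
and then split the right‑hand side as
\[
\left(\vec{x}-\sum_{i=1}^{n}\left<\vec{x},\vec{y}_i\right>_{2^{\prime}}\otimes\vec{k}_i\right)-\varepsilon_{\eta}\sum_{i=1}^{n}\left<\vec{x},\vec{y}_i\right>_{2^{\prime}}\otimes\vec{\eta}_i .
\]

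Next I would identify the two pieces. The bracketed term is $\mathcal{P}_k\vec{x}$ by a second application of Lemma \ref{exp proj operator}. The remaining sum is, by the derivation inside the proof of Lemma \ref{exp proj operator} (equating $\sum_{i}\left<\vec{x},\vec{y}_i\right>_{2^{\prime}}\otimes\vec{\eta}_i$ with $\sum_i \vec{\eta}_i\left<\vec{x},\vec{y}_i\right>$), exactly the complementary operator of Definition \ref{comp proj def 1} read through its explicit (second) formula with $\vec{\eta}_i$ in the role of $\vec{k}_i$, i.e.\ $\tilde{\mathcal{P}}_{\eta}\vec{x}$. Combining the two identifications yields $\mathcal{P}_{k^{\prime}}\vec{x}=\mathcal{P}_k\vec{x}-\varepsilon_{\eta}\tilde{\mathcal{P}}_{\eta}\vec{x}$, which is the claim.

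There is essentially no analytic obstacle here — the argument is a one‑line substitution plus bilinearity — so the only point needing care is notational: $\tilde{\mathcal{P}}_{\eta}$ must be understood via the explicit formula of Definition \ref{comp proj def 1} rather than via the relation $\tilde{\mathcal{P}}_{\eta}+\mathcal{P}_{\eta}=\mathcal{I}$, since the conditions imposed on the $\vec{\eta}_i$ in Definition \ref{k prime def} are $\left<\vec{\eta}_i,\vec{y}_j\right>_2=\vec{0}$ rather than the bi‑orthogonality $\left<\vec{\eta}_i,\vec{y}_j\right>=\delta_{ij}$; the displayed equation is nonetheless a genuine operator identity. It is also worth remarking in passing that, precisely because $\left<\vec{\eta}_i,\vec{y}_j\right>_2=\vec{0}$, the perturbed family still satisfies $\left<\vec{k}^{\prime}_i,\vec{y}_j\right>_2=\left<\vec{k}_i,\vec{y}_j\right>_2=\delta_{ij}$, so $\mathcal{P}_{k^{\prime}}$ remains an admissible projection‑like operator in the sense of Definition \ref{proj def 1}.
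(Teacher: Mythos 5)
Your proof is correct and follows essentially the same route as the paper's: substitute $\vec{k}^{\prime}_i=\vec{k}_i+\varepsilon_{\eta}\vec{\eta}_i$ into the tensor-product form of $\mathcal{P}_{k^{\prime}}$ from Lemma \ref{exp proj operator}, split by bilinearity, and identify the two resulting pieces as $\mathcal{P}_k\vec{x}$ and $\varepsilon_{\eta}\tilde{\mathcal{P}}_{\eta}\vec{x}$. Your additional remark that $\tilde{\mathcal{P}}_{\eta}$ must be read through the explicit formula (since the $\vec{\eta}_i$ are orthogonal to, not bi-orthonormal with, the $\vec{y}_j$) is a worthwhile clarification the paper leaves implicit.
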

\begin{proof}
\begin{align*}
\mathcal{P}_{k^{\prime}}\vec{x} & =\vec{x}-\sum_{i=1}^{n}\left<\vec{x},\vec{y}_i\right>_{2^{\prime}}\otimes \left(\vec{k}_i+\varepsilon_{\eta}\vec{\eta}_i\right) \nonumber\\
    & =\vec{x}-\sum_{i=1}^{n}\left<\vec{x},\vec{y}_i\right>_{2^{\prime}}\otimes \vec{k}_i-\varepsilon_{\eta}\sum_{i=1}^{n}\left<\vec{x},\vec{y}_i\right>_{2^{\prime}}\otimes\vec{\eta}_i \nonumber\\
    & =\mathcal{P}_k\vec{x}-\varepsilon_{\eta}\tilde{\mathcal{P}}_{\eta}\vec{x}
    \end{align*}
\end{proof}

\begin{Definition}
We define \(\mathcal{B}_{k}=\sum_{i=0}^{\infty}\left(\mathcal{A}\mathcal{P}_k\right)^i\)
\end{Definition}
\quad
\begin{Theorem} \label{solution disk theorem}
Assuming \(\tilde{\mathcal{P}}_{k}\sum_{i=0}^{\infty}\left(\mathcal{A}\mathcal{P}_{k}\right)^i\vec{\varphi}=\vec{0}\) and \(\left\|\mathcal{A}\mathcal{P}_k\right\|+\left\|\varepsilon_{\eta}\mathcal{A}\tilde{\mathcal{P}}_{\eta}\right\|<1\), and the linear independence of the \(\vec{k}_i\) it follows that \(\tilde{\mathcal{P}}_{k^{\prime}}\sum_{i=0}^{\infty}\left(\mathcal{A}\mathcal{P}_{k^{\prime}}\right)^i\vec{\varphi}=\vec{0}\)
\end{Theorem}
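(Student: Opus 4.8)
The plan is to show that the Neumann vector does not change when $k$ is replaced by $k'$, i.e. $\mathcal{B}_{k'}\vec{\varphi}=\mathcal{B}_{k}\vec{\varphi}$, and then to read off the conclusion from the additive splitting $\tilde{\mathcal{P}}_{k'}=\tilde{\mathcal{P}}_{k}+\varepsilon_{\eta}\tilde{\mathcal{P}}_{\eta}$, both of whose summands annihilate $\mathcal{B}_{k}\vec{\varphi}$.

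First I would set $\mathcal{X}=\mathcal{A}\mathcal{P}_{k}$ and $\mathcal{Y}=-\varepsilon_{\eta}\mathcal{A}\tilde{\mathcal{P}}_{\eta}$. Lemma \ref{operator split lemma} gives $\mathcal{X}+\mathcal{Y}=\mathcal{A}\mathcal{P}_{k}-\varepsilon_{\eta}\mathcal{A}\tilde{\mathcal{P}}_{\eta}=\mathcal{A}\mathcal{P}_{k'}$. The hypothesis $\left\|\mathcal{A}\mathcal{P}_{k}\right\|+\left\|\varepsilon_{\eta}\mathcal{A}\tilde{\mathcal{P}}_{\eta}\right\|<1$ yields both $\left\|\mathcal{X}\right\|<1$ and $\left\|\mathcal{Y}\right\|<1-\left\|\mathcal{X}\right\|$, so Lemma \ref{operator equivalence lemma} applies with $\overline{\mathcal{X}}=\sum_{i=0}^{\infty}(\mathcal{A}\mathcal{P}_{k})^{i}=\mathcal{B}_{k}$, and delivers, as an identity of convergent operators,
\[
\mathcal{B}_{k'}=\sum_{i=0}^{\infty}(\mathcal{A}\mathcal{P}_{k'})^{i}=\mathcal{Z}=\overline{\mathcal{Z}}=\mathcal{B}_{k}\sum_{i=0}^{\infty}\left(-\varepsilon_{\eta}\mathcal{A}\tilde{\mathcal{P}}_{\eta}\mathcal{B}_{k}\right)^{i}.
\]

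Next I would exploit the hypothesis $\tilde{\mathcal{P}}_{k}\mathcal{B}_{k}\vec{\varphi}=\vec{0}$. Since $\tilde{\mathcal{P}}_{k}\mathcal{B}_{k}\vec{\varphi}$ is a combination of the $\vec{k}_{i}$ (equivalently, via Lemma \ref{exp proj operator}, of the linearly independent vectors $\vec{\psi}_{j}\otimes\vec{k}_{i}$) with coefficients $\left<\mathcal{B}_{k}\vec{\varphi},\vec{y}_{i}\right>$, the linear independence of the $\vec{k}_{i}$ forces these coefficients to vanish, so $\mathcal{B}_{k}\vec{\varphi}$ satisfies the constraints \eqref{Constraints 1}. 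As $\tilde{\mathcal{P}}_{\eta}$ is built from the very same functionals (Definition \ref{comp proj def 1} with $\vec{k}_{i}$ replaced by $\vec{\eta}_{i}$), it follows that $\tilde{\mathcal{P}}_{\eta}\mathcal{B}_{k}\vec{\varphi}=\vec{0}$, hence $\left(-\varepsilon_{\eta}\mathcal{A}\tilde{\mathcal{P}}_{\eta}\mathcal{B}_{k}\right)\vec{\varphi}=\vec{0}$ and every power $i\geqslant1$ of that operator annihilates $\vec{\varphi}$. Applying the displayed identity to $\vec{\varphi}$ therefore collapses the series to its $i=0$ term: $\mathcal{B}_{k'}\vec{\varphi}=\mathcal{B}_{k}\vec{\varphi}$. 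Finally, from Definition \ref{comp proj def 1} and Lemma \ref{operator split lemma}, $\tilde{\mathcal{P}}_{k'}=\mathcal{I}-\mathcal{P}_{k'}=(\mathcal{I}-\mathcal{P}_{k})+\varepsilon_{\eta}\tilde{\mathcal{P}}_{\eta}=\tilde{\mathcal{P}}_{k}+\varepsilon_{\eta}\tilde{\mathcal{P}}_{\eta}$, so
\[
\tilde{\mathcal{P}}_{k'}\sum_{i=0}^{\infty}(\mathcal{A}\mathcal{P}_{k'})^{i}\vec{\varphi}=\tilde{\mathcal{P}}_{k'}\mathcal{B}_{k}\vec{\varphi}=\tilde{\mathcal{P}}_{k}\mathcal{B}_{k}\vec{\varphi}+\varepsilon_{\eta}\tilde{\mathcal{P}}_{\eta}\mathcal{B}_{k}\vec{\varphi}=\vec{0}+\vec{0}=\vec{0},
\]
which is the claim.

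The step needing the most care — and the one I regard as the main obstacle — is the invocation of Lemma \ref{operator equivalence lemma}: one must verify that its norm hypotheses line up exactly with the stated assumption and, more importantly, that its conclusion $\mathcal{Z}=\overline{\mathcal{Z}}$ is an equality of bounded operators (not merely of their action on one fixed vector), so that the factored form of $\mathcal{B}_{k'}$ may legitimately be applied to $\vec{\varphi}$ and then truncated. The remaining ingredients — passing from $\tilde{\mathcal{P}}_{k}\mathcal{B}_{k}\vec{\varphi}=\vec{0}$ to the constraints via linear independence, and the additive splitting of $\tilde{\mathcal{P}}_{k'}$ — are routine. An alternative route avoiding Lemma \ref{operator equivalence lemma} is to note directly that, $\mathcal{B}_{k}\vec{\varphi}$ satisfying the constraints, one has $\mathcal{P}_{k'}\mathcal{B}_{k}\vec{\varphi}=\mathcal{B}_{k}\vec{\varphi}$, so $(\mathcal{I}-\mathcal{A}\mathcal{P}_{k'})\mathcal{B}_{k}\vec{\varphi}=(\mathcal{I}-\mathcal{A})\mathcal{B}_{k}\vec{\varphi}=\vec{\varphi}$; since $\left\|\mathcal{A}\mathcal{P}_{k'}\right\|\leqslant\left\|\mathcal{A}\mathcal{P}_{k}\right\|+\left\|\varepsilon_{\eta}\mathcal{A}\tilde{\mathcal{P}}_{\eta}\right\|<1$ makes $\mathcal{I}-\mathcal{A}\mathcal{P}_{k'}$ invertible with inverse $\mathcal{B}_{k'}$, uniqueness again gives $\mathcal{B}_{k'}\vec{\varphi}=\mathcal{B}_{k}\vec{\varphi}$ and the proof closes as above.
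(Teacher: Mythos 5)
Your proposal is correct and follows essentially the same route as the paper: split \(\mathcal{A}\mathcal{P}_{k'}=\mathcal{A}\mathcal{P}_k-\varepsilon_{\eta}\mathcal{A}\tilde{\mathcal{P}}_{\eta}\) via Lemma \ref{operator split lemma}, refactor the Neumann series with Lemma \ref{operator equivalence lemma}, and kill every term using \(\tilde{\mathcal{P}}_{k'}\mathcal{B}_k\vec{\varphi}=\tilde{\mathcal{P}}_{\eta}\mathcal{B}_k\vec{\varphi}=\vec{0}\). You are in fact somewhat more careful than the paper, which asserts these two annihilation facts without spelling out the linear-independence argument that you supply.
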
 
\begin{proof}
Note that, by virtue of lemma \ref{operator equivalence lemma}, on applying lemma \ref{operator split lemma} and assuming that all \(\vec{k}_i\) are linearly independent, we have
\begin{align*}
  \tilde{\mathcal{P}}_{k^{\prime}}\sum_{i=0}^{\infty}\left(\mathcal{A}\mathcal{P}_{k^{\prime}}\right)^i\vec{\varphi}  & =\tilde{\mathcal{P}}_{k^{\prime}}\sum_{i=0}^{\infty}\left(\mathcal{A}\mathcal{P}_k-\varepsilon_{\eta}\mathcal{A}\tilde{\mathcal{P}}_{\eta}\right)^i\vec{\varphi}  \nonumber\\
    &  =\tilde{\mathcal{P}}_{k^{\prime}}\mathcal{B}_{k}\sum_{i=0}^{\infty}\left(-\varepsilon_{\eta}\mathcal{A}\tilde{\mathcal{P}}_{\eta}\mathcal{B}_{k}\right)^i\vec{\varphi} \nonumber\\
    & =\vec{0}
\end{align*}
by virtue of \(\tilde{\mathcal{P}}_k\sum_{i=0}^{\infty}(\mathcal{A}\mathcal{P}_k)^i\vec{\varphi}=\vec{0}\) implying that \(\tilde{\mathcal{P}}_{k^{\prime}}\mathcal{B}_{k}\vec{\varphi}=\vec{0}\) and \(\tilde{\mathcal{P}}_{\eta}\mathcal{B}_{k}\vec{\varphi}=\vec{0}\). A sufficient condition for the convergence of this sum is given by \(\left\|\mathcal{A}\mathcal{P}_k\right\|+\left\|\varepsilon_{\eta}\mathcal{A}\tilde{\mathcal{P}}_{\eta}\right\|<1\), assuming \(\left\|\mathcal{A}\mathcal{P}_k\right\|<1\) (which is a necessary condition of our initial assumption).
\end{proof}
\begin{Corollary}
There is an open region around \(k\) where \(k^{\prime}\) also gives solutions of \(\tilde{\mathcal{P}}_{k^{\prime}}\sum_{i=0}^{\infty}\left(\mathcal{A}\mathcal{P}_{k^{\prime}}\right)^i\vec{\varphi}=0\). \(k^{\prime}\) additionally generates solutions of equation \eqref{Fredholm Operator 1}, subject to conditions \eqref{Constraints 1}, provided that the \(\vec{k}_i^{\,\prime}\) are linearly independent.
\end{Corollary}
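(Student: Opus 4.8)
The plan is to read both assertions off Theorem~\ref{solution disk theorem} and Theorem~\ref{theorem of solution}; the only real work is to produce an open neighbourhood of $k$ on which the hypotheses of Theorem~\ref{solution disk theorem} hold \emph{uniformly in the perturbation direction}, since that theorem on its own only moves $k$ along one line at a time.

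First I would set $\delta=1-\left\|\mathcal{A}\mathcal{P}_k\right\|$, which is strictly positive because $\left\|\mathcal{A}\mathcal{P}_k\right\|<1$ is a necessary condition for the standing hypothesis $\tilde{\mathcal{P}}_k\sum_{i=0}^{\infty}(\mathcal{A}\mathcal{P}_k)^i\vec{\varphi}=\vec{0}$. Using the form of $\tilde{\mathcal{P}}_\eta$ coming from definition~\ref{comp proj def 1} and lemma~\ref{operator split lemma}, together with $\left|\vec{\eta}_i\right|=1$, $\left|\vec{y}_i\right|=1$, the triangle inequality and the Cauchy--Schwarz inequality, one gets $\left\|\mathcal{A}\tilde{\mathcal{P}}_\eta\right\|\leqslant\left\|\mathcal{A}\right\|\left\|\tilde{\mathcal{P}}_\eta\right\|\leqslant n\left\|\mathcal{A}\right\|$; set $M:=n\left\|\mathcal{A}\right\|+1$, which is finite, strictly positive, and depends only on $\mathcal{A}$, $n$ and the fixed constraint vectors $\vec{y}_i$, but \emph{not} on the admissible unit directions $\vec{\eta}_i$. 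Hence for every admissible tuple of directions and every $\varepsilon_\eta$ with $\left|\varepsilon_\eta\right|<\delta/M$ one has $\left\|\mathcal{A}\mathcal{P}_k\right\|+\left\|\varepsilon_\eta\mathcal{A}\tilde{\mathcal{P}}_\eta\right\|<(1-\delta)+\delta=1$, so Theorem~\ref{solution disk theorem} applies and yields $\tilde{\mathcal{P}}_{k^{\prime}}\sum_{i=0}^{\infty}(\mathcal{A}\mathcal{P}_{k^{\prime}})^i\vec{\varphi}=\vec{0}$.

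Because the radius $\delta/M$ is independent of direction, this produces solutions $k^{\prime}$ filling a whole neighbourhood of $k$; indeed the computation in the proof of Theorem~\ref{solution disk theorem}, being linear in the perturbation (via lemma~\ref{operator split lemma}), goes through verbatim with $\vec{k}^{\prime}_i=\vec{k}_i+\vec{\delta}_i$ for any tuple $(\vec{\delta}_1,\dots,\vec{\delta}_n)$ with $\left\langle\vec{\delta}_i,\vec{y}_j\right\rangle_2=0$, provided only that $\max_i\left\|\vec{\delta}_i\right\|<\delta/M$, so the set of admissible such $k^{\prime}$ is a genuine open region around $k$ inside the affine constraint set $\{(\vec{v}_1,\dots,\vec{v}_n):\left\langle\vec{v}_i,\vec{y}_j\right\rangle_2=\delta_{ij}\}$; this is the first assertion. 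For the second, I would verify the remaining hypotheses of Theorem~\ref{theorem of solution} for $k^{\prime}$: from definition~\ref{k prime def} one has $\left\langle\vec{k}^{\prime}_i,\vec{y}_j\right\rangle_2=\left\langle\vec{k}_i,\vec{y}_j\right\rangle_2+\varepsilon_\eta\left\langle\vec{\eta}_i,\vec{y}_j\right\rangle_2=\delta_{ij}$, so by the earlier lemma that $\left\langle\vec{k}_i,\vec{y}_j\right\rangle_2=\delta_{ij}$ forces the biorthogonality condition of definition~\ref{proj def 1}, that condition holds for the $\vec{k}^{\prime}_i$; combining this with the hypothesised linear independence of the $\vec{k}^{\prime}_i$ and with $\tilde{\mathcal{P}}_{k^{\prime}}\sum_{i=0}^{\infty}(\mathcal{A}\mathcal{P}_{k^{\prime}})^i\vec{\varphi}=\vec{0}$, Theorem~\ref{theorem of solution} shows that $\sum_{i=0}^{\infty}(\mathcal{A}\mathcal{P}_{k^{\prime}})^i\vec{\varphi}$ solves \eqref{Fredholm Operator 1} subject to \eqref{Constraints 1}.

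The one step needing genuine care is the direction-uniform bound on $\left\|\mathcal{A}\tilde{\mathcal{P}}_\eta\right\|$: it is exactly what upgrades the single-line conclusion of Theorem~\ref{solution disk theorem} to an open region, and it works only because there are finitely many fixed constraint vectors $\vec{y}_i$ and the perturbation directions $\vec{\eta}_i$ are normalised. I would also note in passing that linear independence of the $\vec{k}^{\prime}_i$ is itself an open condition, hence automatic after possibly shrinking the neighbourhood; since the statement already carries it as a proviso I would keep it phrased that way.
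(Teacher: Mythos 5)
Your proof is correct and follows the same route as the paper: invoke Theorem~\ref{solution disk theorem} for the first assertion and Theorem~\ref{theorem of solution} for the second. The one place you go beyond the paper is worth noting. The paper's proof simply says the condition \(\left\|\mathcal{A}\mathcal{P}_k\right\|+\left\|\varepsilon_{\eta}\mathcal{A}\tilde{\mathcal{P}}_{\eta}\right\|<1\) ``may always be satisfied by choosing a small enough value of \(\left|\varepsilon_{\eta}\right|\)'', which on its own only yields, for each direction \(\eta\), some interval of admissible \(\varepsilon_\eta\); without a lower bound on these radii that is uniform in \(\eta\), the union of such segments need not be open. Your bound \(\left\|\mathcal{A}\tilde{\mathcal{P}}_\eta\right\|\leqslant n\left\|\mathcal{A}\right\|\) (valid for bounded \(\mathcal{A}\), normalised \(\vec{\eta}_i\), and finitely many constraints) supplies exactly this uniformity and is the honest justification of the word ``open'' in the statement; the paper only addresses this implicitly in the subsequent definition, where \(\sup_{\eta}\left\|\mathcal{A}\tilde{\mathcal{P}}_{\eta}\right\|\) is treated as a finite constant without proof. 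Your closing remarks --- checking \(\left\langle\vec{k}^{\prime}_i,\vec{y}_j\right\rangle_2=\delta_{ij}\) and observing that linear independence of the \(\vec{k}^{\prime}_i\) is an open condition --- are likewise correct and slightly more careful than the paper's one-line appeal to Theorem~\ref{theorem of solution}.
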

\begin{proof}
First observe \(\left\|\mathcal{A}\mathcal{P}_k\right\|+\left\|\varepsilon_{\eta}\mathcal{A}\tilde{\mathcal{P}}_{\eta}\right\|<1\) may always be satisfied (we have as an assumption \(\left\|\mathcal{A}\mathcal{P}_k\right\|<1\)) by choosing a small enough value of \(\left|\varepsilon_{\eta}\right|\). The second is merely by virtue of theorem \ref{theorem of solution}.
\end{proof}

Without loss of generality we may assume \(\varepsilon_{\eta}\geqslant 0\) and find an upper bound for \(\varepsilon_{\eta}\), thus
\[\varepsilon_{\eta}<\frac{1-\left\|\mathcal{A}\mathcal{P}_k\right\|}{\left\|\mathcal{A}\tilde{\mathcal{P}}_{\eta}\right\|}\]
\begin{Definition}
We define \(\varepsilon\leqslant\varepsilon_{\eta}\: \forall \: \left|\vec{\eta}_i\right|=1, \: \left<\vec{\eta}_i,\vec{y}_j\right>_2=\vec{0}\), which implies that
\[\varepsilon<\inf_{\left|\vec{\eta}_i\right|=1, \: \left<\vec{\eta}_i,\vec{y}_j\right>_2=\vec{0}}\varepsilon_{\eta}=\frac{1-\left\|\mathcal{A}\mathcal{P}_k\right\|}{\displaystyle\sup_{\left|\vec{\eta}_i\right|=1, \: \left<\vec{\eta}_i,\vec{y}_j\right>_2=\vec{0}}\left\|\mathcal{A}\tilde{\mathcal{P}}_{\eta}\right\|}\]
\end{Definition}
Since \(\sup_{\left|\vec{\eta}_i\right|=1, \: \left<\vec{\eta}_i,\vec{y}_j\right>_2=\vec{0}}\left\|\mathcal{A}\tilde{\mathcal{P}}_{\eta}\right\|\) is not dependent upon the choice of \(k\) it may be treated as a constant with regards to varying \(k\). Consequently, as \(k\) approaches any boundary of the open region of solutions, so \(\left\|\mathcal{A}\mathcal{P}_k\right\|\) approaches 1 from below since the region for which theorem \ref{solution disk theorem} remains valid can not extend beyond such a boundary.

\section{Discussion and Conclusions}
Whilst the approach identified in the preceding sections is restricted in that it is not concerned with finding the general solution of a particular equation, it is valuable in that it does provide a systematic approach, subject to several fairly standard conditions, of finding a particular solution to a particular equation. It is acknowledged that evidence is required of the method's utility and applicability in a larger class of general cases.

We have not given simple sufficient conditions for the existence of a \(k\) that offers a solution, but only guidance in finding such a \(k\) if it exists. Exploring how large a class of equations may be solved with this method is a promising direction for further work. The relevance and importance is provided by the linkage of the underlying equation to a class of models that may be used to describe the complex dynamics that are present within polymer micro-moulding processes (now increasingly important in supplying healthcare and related solutions) and a great many other physical phenomena.  

In conclusion, it does appear that we can sometimes solve equation \eqref{Fredholm Operator 1} subject to constraints \eqref{Constraints 1} by looking for values of \(k\) that minimise \(\left\|\mathcal{A}\mathcal{P}_k\right\|\) for some generalised notion of minimisation and then feeding these values of \(k\) back into \(\tilde{\mathcal{P}}_k\sum_{i=0}^{\infty}(\mathcal{A}\mathcal{P}_k)^i\vec{\varphi}\) which, if it evaluates to \(\vec{0}\) and provided that the \(\vec{k}_i\) are linearly independent, gives a solution \(\sum_{i=0}^{\infty}(\mathcal{A}\mathcal{P}_k)^i\vec{\varphi}\). We can do this even with a countably infinite number of constraints.

\bibliographystyle{elsarticle-num}
\bibliography{myPhDbib2}

\end{document}